\newtheorem{thm}{Theorem}[section]
\newtheorem{prop}[thm]{Proposition}
\newtheorem{lem}[thm]{Lemma}
\newtheorem{cor}[thm]{Corollary}
\newtheorem{conj}[thm]{Conjecture}
\theoremstyle{definition}
\newtheorem{defn}[thm]{Definition}
\newtheorem{remk}[thm]{Remark}
\newtheorem{exam}[thm]{Example}
\begin{document}
\title{$\hat{H}$-eigenvalues of Hermitian tensors and some applications}
\author{Haojie Chen, Yang Yang}
\address{Department of Mathematics, Zhejiang Normal University, Jinhua Zhejiang, 321004, China}
\email{chj@zjnu.edu.cn; yy20163243@163.com}
\date{}
\maketitle

\begin{abstract}
We introduce $\hat{H}$-eigenvalue for $2m$-th order $n$-dimensional complex tensors. Then we determine several checkable inclusion sets for $\hat{H}$-eigenvalues and derive some criterions for the Hermitian positive definiteness (semi-definiteness) of Hermitian and CPS tensors. We also apply the Hermitian tensors to study holomorphic sectional curvature in complex differential geometry and reprove the algebraic part of recent results by Alvarez-Heier-Zheng \cite{AHZ} and Chaturvedi-Heier \cite{CH}.
\end{abstract}

\tableofcontents
\section{Introduction}

The theory of tensors, as a generalization of matrix theory has been widely applied in pure and applied mathematics. An intrinsic feature of tensors is the spectral theory of tensors, i.e. tensor eigenvalues. The eigenvalues of higher-order tensors were introduced independently by Qi \cite{Q1} and Lim \cite{Lim} in 2005. Since then, the spectral theory of tensors has been extensively studied and has been found a lot of applications in many subjects such as statistical data analysis, higher order Markov chains, polynomial optimization theory, hypergraph partition and so on (see \cite{QL} and the reference therein). 

The eigenvalues of tensors are closely related to the notion of positive definite (PD) tensors. The following result is proved in \cite{Q1} which starts the theory.
\begin{thm} [Qi] \label{1.1}
Let $A$ be a real symmetric $2m$-th order $n$-dimensional tensor. Then $A$ is positive definite (semidefinite) if and only if its H-eigenvalues are all positive (nonnegative). 
\end{thm}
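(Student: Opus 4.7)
The plan is to prove both directions separately, with the ``only if'' direction being the easy one and the ``if'' direction requiring a variational/Lagrange-multiplier argument on a compact level set.

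For the easy direction, I would start by recalling that an H-eigenpair $(\lambda, x)$ with $x \in \mathbb{R}^n \setminus \{0\}$ satisfies $Ax^{2m-1} = \lambda x^{[2m-1]}$, where $x^{[2m-1]} = (x_1^{2m-1}, \dots, x_n^{2m-1})^T$. Contracting both sides with $x$ yields
\[
Ax^{2m} = \lambda \sum_{i=1}^n x_i^{2m}.
\]
Since $\sum_i x_i^{2m} > 0$ for $x \neq 0$, positive (semi)definiteness of $A$ forces $\lambda > 0$ (resp.\ $\lambda \geq 0$). This handles the necessity.

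For sufficiency, the key idea is to realize $\min_{x\neq 0} Ax^{2m}/\sum_i x_i^{2m}$ as the smallest H-eigenvalue. Concretely, I would consider the constrained optimization problem
\[
\min \; Ax^{2m} \quad \text{subject to} \quad \sum_{i=1}^n x_i^{2m} = 1.
\]
The feasible set is a compact subset of $\mathbb{R}^n$ (closed and bounded, since $2m$ is even), and $Ax^{2m}$ is continuous, so a minimizer $x^\ast$ exists. Applying Lagrange multipliers and using the symmetry of $A$, which gives $\nabla(Ax^{2m}) = 2m\, Ax^{2m-1}$ and $\nabla(\sum_i x_i^{2m}) = 2m\, x^{[2m-1]}$, one obtains $Ax^{\ast\,2m-1} = \lambda\, x^{\ast\,[2m-1]}$, exhibiting $\lambda$ as an H-eigenvalue. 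Contracting with $x^\ast$ shows that the minimum value equals $\lambda$. If all H-eigenvalues are positive (resp.\ nonnegative), then $Ax^{2m} \geq \lambda > 0$ (resp.\ $\geq 0$) on the constraint set, and by homogeneity of degree $2m$ this extends to $Ax^{2m} > 0$ (resp.\ $\geq 0$) for all $x \neq 0$.

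The main obstacle I anticipate is a technical one rather than conceptual: verifying that the Lagrange multiplier condition at the minimizer genuinely produces an H-eigenpair in the sense defined, i.e.\ that the gradient of the constraint function does not vanish at $x^\ast$ (so that the Lagrange condition is non-degenerate) and that $x^\ast$ is nonzero. Both follow from $\sum_i (x_i^\ast)^{2m} = 1$, which forces some $x_i^\ast \neq 0$ and hence $x^{\ast\,[2m-1]} \neq 0$. Once this is dispatched, the equivalence is immediate, and one sees as a byproduct that the minimum of the Rayleigh-like quotient $Ax^{2m}/\sum_i x_i^{2m}$ coincides with the smallest H-eigenvalue, which is the genuinely useful content of the theorem.
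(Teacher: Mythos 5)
Your proof is correct, and it is the standard Lagrange-multiplier argument due to Qi. The paper itself does not prove Theorem~\ref{1.1}: it is quoted from \cite{Q1} without proof. However, the paper's proof of Proposition~\ref{3.1} (the Hermitian analogue, which yields Theorem~\ref{thm1.2}) follows precisely the same template you use here --- necessity by contracting the eigenvalue equation with the eigenvector, and sufficiency by minimizing $Ax^{2m}$ over the compact level set $\{\sum_i x_i^{2m}=1\}$, applying Lagrange multipliers with the symmetry of $A$ to identify the minimizer as an H-eigenvector, and then invoking degree-$2m$ homogeneity. One small point worth making explicit, which you correctly flag, is that the Lagrange condition is non-degenerate because $\nabla(\sum_i x_i^{2m}) = 2m\,x^{[2m-1]}$ cannot vanish on the constraint set; you also need the symmetry of $A$ (not just generality) to get $\nabla(Ax^{2m}) = 2m\,Ax^{2m-1}$, which you do use. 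So your argument matches the approach the paper takes for its own analogous statement.
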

 An eigenvalue inclusion set of a tensor is a set where its H-eigenvalues lie in. Many important properties about tensors can be obtained by studying the eigenvalue inclusion sets. A fundamental problem is to find different checkable inclusion sets for a given tensor. Qi \cite{Q1} first gave an eigenvalue inclusion set for real symmetric tensors, which is a generalization of the well-known Ger\u{s}gorin inclusion set of matrices. After that, several new eigenvalue inclusion sets of real tensors
have been found. Among them, two notable inclusion sets are given by Li-Li \cite{LL} and Li-Li-Kong \cite{LLK}. They constructed two Brauer type
eigenvalue inclusion sets for general tensors and showed that they are tighter than the set given by Qi. For more results on different types of eigenvalue inclusion sets, we refer to \cite{BWS}\cite{LCL}\cite{LJL}\cite{LZL}\cite{WZC}.

Symmetric real tensors is in one-to-one correspondence with homogeneous polynomials in several real variables. For homogeneous polynomials in complex conjugate variables which take real values, it is the conjugate partial symmetric (CPS) tensors which play a similar role. Jiang-Li-Zhang \cite{JLZ} first introduced the concept of CPS tensors and showed that if a homogeneous multivariate complex polynomial takes real values, it is induced by a unique CPS tensor. They also studied other properties of CPS tensor involving their C-eigenvalues. Ni \cite{N} introduced the concept of Hermitian tensors which is more general than CPS tensors and applied it to quantum physics. A CPS tensor is a symmetric Hermitian tensor of the same dimension. As the higher dimensional generalizations of Hermitian matrices, CPS tensors and Hermitian tensors are expected to play an important role in the theory of tensors. We refer to \cite{FJL1}\cite{FJL2}\cite{NY} for recent studies on CPS tensor and Hermitian tensors. 

In this paper, we mainly study the spectral properties of Hermitian tensors. First, we have the following definition of Hermitian positive definiteness of a $2m$-th order complex tensor, as a generalization of positive definite Hermitian matrices. \footnote{Our definition of Hermitian positive definiteness differs from the definition of positive Hermitian tensors in \cite{N} that we input the same vector in (\ref{1.12}) hence is weaker.}
\begin{defn} \label{complex1}
Let $A=(a_{i_1\cdots i_m\bar{j}_1\cdots \bar{j}_m})$ be a $2m$-th order $n$-dimensional complex tensor. $A$ is said to be Hermitian positive definite (semidefinite) if for any nonzero complex vector $\mathbf{x}=(x_1,\cdots, x_n)^T\in \mathbb C^n\setminus \{0\}$, \begin{align} f(A)(\mathbf{x})=\sum_{i_1,\cdots, i_m, j_1, \cdots, j_m=1}^n a_{i_1\cdots i_m\bar{j}_1\cdots \bar{j}_m} x_{i_1}\cdots x_{i_m}\overline{x_{j_1}}\cdots\overline{x_{j_m}}>0 (\geq 0).\label{1.12}\end{align}
\end{defn}

When $A$ is a real tensor, the condition that (\ref{1.12}) holds only for real nonzero vectors $\mathbf{x}$ gives the usual definition of positive definiteness of $A$ (\cite{QL}). If $m=1$ and $A$ is an $(n\times n)$ real symmetric matrix, then Hermitian positive definiteness is equivalent to positive definiteness. When $m>1$, we show that Hermitian positive definiteness is in general strictly stronger than positive definiteness for real tensors (see section 2). Therefore, the positivity of H-eigenvalues no longer implies Hermitian positive definiteness. To overcome this, we introduce the following concepts of $\hat{H}$-eigenvalues for $2m$-th order complex tenors.
\begin{defn}
Let $A=(a_{i_1\cdots i_m\bar{j}_1\cdots \bar{j}_m})$ be a $2m$-th order $n$-dimensional complex tensor. We call that $\lambda\in \mathbb C$ is an $\hat{H}$-eigenvalue of $A$ with $\hat{H}$-eigenvector $\mathbf{x}$ if there is a nonzero complex vector $\mathbf{x}=(x_1,\cdots, x_n)^T\in \mathbb C^n\setminus \{0\}$ such that $$\sum_{i_2,\cdots, i_m, j_1, \cdots, j_m=1}^n a_{ii_2\cdots i_m\bar{j}_1\cdots \bar{j}_m} x_{i_2}\cdots x_{j_m}\overline{x_{j_1}}\cdots\overline{x_{j_m}}=\lambda \bar{x}_{i}|x_{i}|^{2m-2}, {\forall}\  1\leq i\leq n.$$ 
	\end{defn}
 If $m=1$,  $\hat{H}$-eigenvalues identify with ordinary matrix eigenvalues. We prove the following result.
\begin{thm} \label{thm1.2}
Let $A$ be a $2m$-th order $n$-dimensional CPS tensor. Then $A$ is Hermitian positive definite (semidefinite) if and only if the $\hat{H}$-eigenvalues of $A$ are all positive (nonnegative). 
\end{thm}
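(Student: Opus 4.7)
The strategy is to adapt Qi's variational argument for Theorem~\ref{1.1} to the CPS setting: replace the real unit $\ell^{2m}$-sphere by the compact hypersurface $S = \{\mathbf{x}\in\mathbb{C}^n : \sum_{i=1}^n |x_i|^{2m} = 1\}$, and replace real gradients by Wirtinger derivatives $\partial/\partial \bar{x}_i$ to handle the non-holomorphicity of $f(A)$.

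First I would record the scalar identity obtained by multiplying the defining $\hat{H}$-eigenvalue equation by $x_i$ and summing over $i$, namely
\begin{equation*}
f(A)(\mathbf{x}) \;=\; \lambda \sum_{i=1}^n |x_i|^{2m}.
\end{equation*}
Since $A$ is CPS, $f(A)(\mathbf{x})\in\mathbb{R}$ for every $\mathbf{x}$, so every $\hat{H}$-eigenvalue of a CPS tensor is automatically real. This identity also handles the \emph{necessity} direction: if $A$ is Hermitian positive definite (resp.\ semidefinite) and $\mathbf{x}\neq 0$ is an $\hat{H}$-eigenvector, then $\lambda = f(A)(\mathbf{x})/\sum_i |x_i|^{2m}$ is positive (resp.\ nonnegative).

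For \emph{sufficiency}, I would minimize the continuous real-valued function $f(A)$ on the compact set $S$. By the $2m$-homogeneity $f(A)(t\mathbf{x})=|t|^{2m}f(A)(\mathbf{x})$, establishing $\min_S f(A) > 0$ (resp.\ $\geq 0$) is equivalent to Hermitian positive definiteness (resp.\ semidefiniteness). At a minimizer $\mathbf{x}_*\in S$, the Lagrange condition $\partial f(A)/\partial \bar{x}_i = \mu\,\partial g/\partial \bar{x}_i$ with $g=\sum_k|x_k|^{2m}-1$ reads, after cancelling the overall factor $m$ and using the symmetry of $A$ in the holomorphic indices,
\begin{equation*}
\sum_{i_1,\ldots,i_m,\,j_2,\ldots,j_m} a_{i_1\cdots i_m \bar{i}\,\bar{j}_2\cdots \bar{j}_m}\, x_{i_1}\cdots x_{i_m}\,\overline{x_{j_2}}\cdots\overline{x_{j_m}} \;=\; \mu\, x_i |x_i|^{2m-2}
\end{equation*}
for every $i$. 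Taking complex conjugates and applying the Hermitian-symmetry relation $\overline{a_{i_1\cdots i_m\bar{j}_1\cdots\bar{j}_m}} = a_{j_1\cdots j_m\bar{i}_1\cdots\bar{i}_m}$, then relabelling dummy indices, this rewrites precisely as the $\hat{H}$-eigenvalue equation at $\mathbf{x}_*$ with eigenvalue $\lambda_* := \bar\mu$. The scalar identity then gives $\lambda_* = f(A)(\mathbf{x}_*) = \min_S f(A)$, which is real. If all $\hat{H}$-eigenvalues are positive (resp.\ nonnegative), then in particular $\lambda_* > 0$ (resp.\ $\geq 0$), and the conclusion follows by homogeneity.

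The main technical obstacle, and the step that uses the CPS hypothesis most essentially, is the rewriting of the Lagrange stationarity equation into the $\hat{H}$-eigenvalue form. Symmetry of $A$ in the holomorphic indices is what lets $\partial/\partial \bar{x}_i$ produce a single contraction with $\bar{i}$ canonically placed; the Hermitian symmetry is then exactly what converts the conjugated equation into the form appearing in the definition of an $\hat{H}$-eigenvector. Without both of these, for instance for a general Hermitian but non-symmetric tensor, the Lagrange equation would not close up into an $\hat{H}$-eigenvalue equation and a different route would be required.
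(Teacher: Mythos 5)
Your argument is correct and follows essentially the same variational route as the paper: minimize $f(A)$ on the compact $\ell^{2m}$-sphere, apply Lagrange multipliers via Wirtinger calculus, and identify the minimizer as an $\hat{H}$-eigenvector (the paper states this as Proposition~\ref{3.1} for general Hermitian $A$ via $S_A$, and specializes). The only cosmetic difference is that the paper differentiates with respect to $x_i$ so the $\hat{H}$-eigenvalue equation appears directly, whereas you differentiate with respect to $\bar{x}_i$ and then conjugate; note that for your route it is the symmetry of $A$ in the conjugated (anti-holomorphic) indices, not the holomorphic ones, that canonically places $\bar{i}$ before you conjugate.
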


 With this characterization, $\hat{H}$-eigenvalue provides an effective tool to study Hermitian positive definiteness of Hermitian tensors. We study inclusion sets of $\hat{H}$-eigenvalues first. Along the line of real tensors, we determine several checkable inclusion sets for the $\hat{H}$-eigenvalues of $A$, including the Ger\u{s}gorin type inclusion set $K_{ger}(A)$, the Li-Li-Kong type inclusion set $K_{llk}(A)$ and Li-Li type inclusion set $K_{ll}(A)$. We refer to Definition \ref{d4.1}, Definition \ref{d4.3}, Definition \ref{d4.5} for the precise definitions.
Denote the set of all $\hat{H}$-eigenvalues of a complex tensor $A$ to be $\sigma_h(A)$. We prove the following result in section 4.
\begin{prop} Let $A$ be a $2m$-th order $n$-dimensional complex tensor. Then $K_{ger}(A)$, $K_{llk}(A)$ and $K_{ll}(A)$ are all inclusion set for the  $\hat{H}$-eigenvalues of $A$. More precisely, we have 
	$\sigma_h(A)\subseteq K_{ll}(A)\subseteq K_{llk}(A)\subseteq K_{ger}(A).$ 
\end{prop}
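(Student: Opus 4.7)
The arguments follow the classical templates of Qi, Li--Li and Li--Li--Kong for real tensors, adapted to the Hermitian setting through the $\hat{H}$-eigenvalue equation
\[
\sum_{i_2,\ldots,i_m,j_1,\ldots,j_m=1}^n a_{ii_2\cdots i_m\bar{j}_1\cdots\bar{j}_m}\,x_{i_2}\cdots x_{i_m}\overline{x_{j_1}}\cdots\overline{x_{j_m}}=\lambda\,\bar{x}_i|x_i|^{2m-2},\qquad 1\le i\le n.
\]
Since the right-hand side has modulus $|\lambda||x_i|^{2m-1}$ and the left-hand side is a sum of monomials of modulus $|a_{ii_2\cdots \bar{j}_m}||x_{i_2}|\cdots|x_{j_m}|$, after taking absolute values the structural content becomes identical to the real case. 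Throughout I fix $(\lambda,\mathbf{x})\in\sigma_h(A)\times(\mathbb{C}^n\setminus\{0\})$.

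For the coarsest inclusion $\sigma_h(A)\subseteq K_{ger}(A)$, I would pick an index $t$ with $|x_t|=\max_i|x_i|>0$, substitute $i=t$ into the eigenvalue equation, move the diagonal contribution $a_{t\cdots t\bar{t}\cdots\bar{t}}\,\bar{x}_t|x_t|^{2m-2}$ to the right, and bound each remaining monomial by $|a_{t i_2\cdots\bar{j}_m}||x_t|^{2m-1}$ using $|x_{i_k}|,|x_{j_k}|\le|x_t|$. Dividing by $|x_t|^{2m-1}$ produces $|\lambda-a_{t\cdots t\bar{t}\cdots\bar{t}}|\le R_t(A)$, where $R_t(A)$ is the off-diagonal row sum in the $t$-th slice; this is precisely the Gershgorin-type condition of Definition \ref{d4.1}.

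The nontrivial inclusion is $\sigma_h(A)\subseteq K_{ll}(A)$, obtained by a Brauer-type refinement. I would order the components so that $|x_t|\ge|x_s|\ge|x_i|$ for all $i\ne t,s$, write the eigenvalue equations at both $i=t$ and $i=s$, and partition the $2m-1$ off-diagonal slots $(i_2,\ldots,i_m,j_1,\ldots,j_m)$ according to whether every entry lies in $\{t,s\}$ or at least one entry is different from both. The latter terms are bounded using $|x_i|\le|x_s|$, while the former are kept exact; combining the $t$-row and $s$-row inequalities multiplicatively yields the two-index condition defining $K_{ll}(A)$, placing $\lambda$ in the $(t,s)$-component. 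The remaining chain $K_{ll}(A)\subseteq K_{llk}(A)\subseteq K_{ger}(A)$ is purely set-theoretic and follows from the definitions: each successive enlargement corresponds to grouping together terms that the tighter set kept separate, which can only enlarge the localization region. The main obstacle will be the combinatorial bookkeeping in the Brauer step, namely isolating exactly the monomials $x_{i_2}\cdots\overline{x_{j_m}}$ whose indices all lie in $\{t,s\}$; the holomorphic slots $(i_2,\ldots,i_m)$ and the antiholomorphic slots $(j_1,\ldots,j_m)$ must be treated symmetrically in the modulus estimate but enter the definitions of $K_{llk}(A)$ and $K_{ll}(A)$ through distinct row-sum pieces, and matching the partition to Definitions \ref{d4.3} and \ref{d4.5} is where the calculation is most delicate.
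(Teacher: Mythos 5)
Your Ger\u{s}gorin step is exactly the paper's argument and is correct. But there is a genuine gap in the $\sigma_h(A)\subseteq K_{ll}(A)$ step: you describe the wrong partition of the off-diagonal index tuples, and consequently the argument would not produce the set $K_{ll}(A)$ of Definition~\ref{d4.5}. In Li--Li's construction (and in the paper's proof) the $2m-1$ off-diagonal slots in the $t$-th slice are split according to whether the tuple \emph{contains the index $t$ at all} (giving $\hat{\bigtriangleup}_t$, which feeds into $\hat{r}_t(A)$) or \emph{contains no $t$} (giving $\bigtriangleup'_t$, which feeds into $r'_t(A)$). The point is that for a tuple in $\bigtriangleup'_t$ every factor of the monomial $x_{i_2}\cdots \overline{x_{j_m}}$ is indexed by something different from $t$, so each has modulus at most $|x_s|$, yielding the bound $r'_t(A)\,|x_s|^{2m-1}$; the tuples in $\hat{\bigtriangleup}_t$ only receive the trivial bound $\hat{r}_t(A)\,|x_t|^{2m-1}$. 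Your proposed split -- "every entry lies in $\{t,s\}$" versus "at least one entry outside $\{t,s\}$" -- is a different partition: a tuple with one entry outside $\{t,s\}$ and several entries equal to $t$ would, under your bookkeeping, only get the bound $|x_s||x_t|^{2m-2}$, not $|x_s|^{2m-1}$, and the resulting row sums would not coincide with $\hat{r}_t(A)$ and $r'_t(A)$. Moreover you say the "former are kept exact," but nothing is kept exact in the paper's proof; both groups are estimated by absolute values.

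A secondary issue is your claim that $K_{ll}(A)\subseteq K_{llk}(A)\subseteq K_{ger}(A)$ is "purely set-theoretic" and amounts to grouping terms. These are strict set inclusions between regions cut out by different polynomial inequalities, and each one needs a short algebraic argument (e.g., to see $K_{llk}(A)\subseteq K_{ger}(A)$, suppose $\lambda\notin K_{ger}(A)$, so $|\lambda-a_{i\cdots i\bar{i}\cdots\bar{i}}|>r_i(A)$ for all $i$, and deduce that the defining inequality of $K_{i,j}(A)$ fails). The paper delegates these to \cite{LL} and \cite{LLK}, which is fine, but "set-theoretic" is not an accurate description. Finally, note the paper also gives a direct, self-contained proof of $\sigma_h(A)\subseteq K_{llk}(A)$ by isolating the single off-diagonal term with tuple $(j,\ldots,j,\bar{j},\ldots,\bar{j})$; your plan subsumes this logically via $K_{ll}\subseteq K_{llk}$, which is acceptable once the $K_{ll}$ step is fixed.
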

Explicit matrix examples are given in section 4 which illustrate that the above inclusion are strict. We remark that several other inclusion sets for H-eigenvalues can be also defined similarly for $\hat{H}$-eigenvalues of Hermitian tensors. 

Applying these inclusion sets, we derive some criterions for the Hermitian positive definiteness (semi-definiteness) of Hermitian tensors. In particular, we define diagonally dominated Hermitian tensor as follows.
\begin{defn}
	Let $A=(a_{i_1\cdots i_m\bar{j}_1\cdots \bar{j}_m})$ be a $2m$-th order $n$-dimensional Hermitian tensor. Denote $r_i(A)=\sum_{(i_2\cdots i_m\bar{j}_1\cdots \bar{j}_{m})\neq (i\cdots i\bar{i}\cdots\bar{i})}|a_{ii_2\cdots i_{m}\bar{j}_1\cdots \bar{j}_m}|$. $A$ is called to be (strictly) diagonally dominated if for any $1\leq i\leq n$, $a_{i\cdots i \bar{i}\cdots \bar{i}}\geq r_i(A)$ ($a_{i\cdots i \bar{i}\cdots \bar{i}}> r_i(A))$. \end{defn}
Using $\hat{H}$-eigenvalues, we prove that (Proposition 4.5) 
	\begin{prop} \label{p1.7}
	If $A$ is (strictly) diagonally dominated, then $A$ is Hermitian positive (definite) semidefinite.
	\end{prop}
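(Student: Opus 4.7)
The plan is to combine the Gershgorin-type inclusion $K_{ger}(A)$ with the characterization of Hermitian positive (semi)definiteness in Theorem~\ref{thm1.2}. Under (strict) diagonal dominance, all $\hat{H}$-eigenvalues of $A$ will be forced to be real and nonnegative (respectively positive), from which the conclusion follows.

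Concretely, I would first observe that every $\hat{H}$-eigenvalue of a Hermitian tensor $A$ is real. Given an $\hat{H}$-eigenpair $(\lambda,\mathbf{x})$, multiplying the defining equation by $x_i$ and summing over $i$ yields
\[
\lambda\sum_{i=1}^{n}|x_i|^{2m}=f(A)(\mathbf{x}),
\]
and the Hermitian condition makes the right-hand side real, so $\lambda\in\mathbb{R}$. The same symmetry forces each diagonal entry $a_{i\cdots i\bar{i}\cdots\bar{i}}$ to be real. The inclusion $\sigma_h(A)\subseteq K_{ger}(A)$ then provides an index $i$ with $|\lambda-a_{i\cdots i\bar{i}\cdots\bar{i}}|\leq r_i(A)$, which, thanks to the reality just established, reduces to $\lambda\geq a_{i\cdots i\bar{i}\cdots\bar{i}}-r_i(A)$. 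The (strict) diagonal dominance hypothesis then renders the right side $\geq 0$ (respectively $>0$), so every $\hat{H}$-eigenvalue is nonnegative (respectively positive). An application of Theorem~\ref{thm1.2} yields the Hermitian positive (semi)definiteness of $A$.

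The delicate point is that Theorem~\ref{thm1.2} is phrased for CPS tensors, whereas Proposition~\ref{p1.7} is stated for Hermitian tensors. For Hermitian $A$ that is not already CPS, I would pass to the CPS-symmetric part $A^{sym}$, using $f(A)(\mathbf{x})=f(A^{sym})(\mathbf{x})$ so that Hermitian positive (semi)definiteness of $A$ is equivalent to that of $A^{sym}$. One then has to check that diagonal dominance of $A$ transfers to $A^{sym}$: the diagonal entries of $A^{sym}$ coincide with those of $A$, while $r_i(A^{sym})$ is controlled by the triangle inequality applied to the symmetrization average. Making this comparison tight enough to apply Theorem~\ref{thm1.2} to $A^{sym}$ is where the main bookkeeping lies, and it is the principal obstacle I anticipate; alternatively, one could attempt to bypass this step by directly proving that $\hat{H}$-eigenvalue nonnegativity characterizes Hermitian positive semidefiniteness for the broader class of Hermitian tensors.
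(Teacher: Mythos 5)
Your core argument is exactly the paper's (Corollary~\ref{4.5}): Hermitian tensors have real $\hat{H}$-eigenvalues, $\sigma_h\subseteq K_{ger}$ forces every $\hat{H}$-eigenvalue to be at least $a_{i\cdots i\bar{i}\cdots\bar{i}}-r_i\geq 0$ for some $i$, and Proposition~\ref{3.1} (or Theorem~\ref{thm1.2}) closes the argument. You also correctly flag the CPS-versus-Hermitian gap. However, your proposed resolutions of that gap do not work, and the obstacle you anticipate is in fact insurmountable as stated.

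The symmetrization $S_A$ has entries $b_{ii_2\cdots i_m\bar{j}_1\cdots\bar{j}_m}$ which average over $a_{i_{\sigma(1)}\cdots i_{\sigma(m)}\bar{j}_{\tau(1)}\cdots\bar{j}_{\tau(m)}}$, whose leading index $i_{\sigma(1)}$ need not equal $i$. The triangle inequality therefore bounds $r_i(S_A)$ by a mixture of the $r_j(A)$ over several $j$, \emph{not} by $r_i(A)$, and the implication ``$A$ diagonally dominated $\Rightarrow S_A$ diagonally dominated'' fails. Concretely, for $m=2$, $n=2$, take the Hermitian tensor with $a_{11\bar{1}\bar{1}}=1$, $a_{22\bar{2}\bar{2}}=200$, $a_{21\bar{2}\bar{2}}=a_{22\bar{2}\bar{1}}=100$, all other entries zero. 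Then $r_1(A)=0\leq 1$ and $r_2(A)=200\leq 200$, so $A$ is diagonally dominated; but $b_{12\bar{2}\bar{2}}=\tfrac12(a_{12\bar{2}\bar{2}}+a_{21\bar{2}\bar{2}})=50>1=b_{11\bar{1}\bar{1}}$, so $S_A$ is not. Indeed
\begin{equation*}
f(A)(\mathbf{x})=|x_1|^4+200|x_2|^4+200|x_2|^2\,\mathrm{Re}(x_1\bar{x}_2)
\end{equation*}
is negative at $\mathbf{x}=(-3,1)^T$, so $A$ is not Hermitian PSD. This same example also defeats your alternative route: its $\hat{H}$-eigenvalues (of $A$ itself, not $S_A$) are $\{1,200\}$, all positive, so nonnegativity of $\sigma_h(A)$ does not characterize Hermitian PSD for non-CPS Hermitian tensors; the characterization in Proposition~\ref{3.1} is genuinely about $\sigma_h(S_A)$.

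The paper's body version, Corollary~\ref{4.5}, avoids this entirely by placing the diagonal-dominance hypothesis on $S_A$ rather than on $A$; the introduction's phrasing of Proposition~\ref{p1.7} is looser than what is actually proved. So the right move is not to transfer diagonal dominance to $S_A$ by bookkeeping, but to assume it of $S_A$ outright (equivalently, restrict to CPS tensors, where $A=S_A$), and then your argument goes through verbatim.
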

 We also define the LLK type and LL type Hermitian tensor in section 4 and show that they are more general than diagonally dominated tensors and are both Hermitian positive semidefinite. An approximate criterion of Hermitian positive definiteness (Proposition \ref{p4.16}) is obtained in section 4.

In the last part of the paper, we relate the Hermitian tensors to holomorphic sectional curvature in complex differential geometry. Holomorphic sectional curvature is a fundamental invariant on a Hermitian manifold, analogous to the sectional curvature on a Riemannian manifold. One central topic in complex geometry is to study compact Hermitian manifolds with positive or negative holomorphic sectional curvature. Many significant progresses have been made recently. In particular, Wu-Yau \cite{WY1} proves that projective manifolds with negative holomorphic sectional curvature must have ample canonical bundle (it is later extended by Tosatti-Yang \cite{TY} to the K\"ahler setting, see also \cite{DT}\cite{WY2}\cite{YZ} for more generalization). In the positive curvature side, there is a famous conjecture by Yau.

\begin{conj} [Yau, Problems 67 in \cite{Yau}] Consider a compact K\"ahler manifold with positive holomorphic sectional curvature, is it
unirational? Is it projective? If a projective manifold is obtained by blowing
up a compact manifold with positive holomorphic sectional curvature along
a subvariety, does it still carry a metric with positive holomorphic sectional
curvature?
\end{conj}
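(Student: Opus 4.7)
The final statement is an open conjecture of Yau, so what follows is a speculative outline rather than a realistic plan of attack. The natural first move, in keeping with the paper, is to translate the geometric condition into the Hermitian-tensor language. At each point $p$ of a K\"ahler manifold $(M,g)$, the curvature tensor $R_{i\bar j k \bar\ell}$ is a fourth-order ($m=2$) Hermitian tensor, and positive holomorphic sectional curvature is exactly the assertion that the induced homogeneous polynomial is strictly positive on $\mathbb{C}^n\setminus\{0\}$. By Theorem \ref{thm1.2}, when the tensor is CPS this is equivalent to the pointwise positivity of every $\hat{H}$-eigenvalue, so the inclusion sets $K_{ger}$, $K_{llk}$, $K_{ll}$ and the diagonal-dominance criterion (Proposition \ref{p1.7}) supply effective pointwise certificates for positive HSC.

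For the projectivity and unirationality parts, the plan is to adapt the strategy underlying the Chaturvedi--Heier and Alvarez--Heier--Zheng results whose algebraic cores the paper already reproves: combine the pointwise $\hat{H}$-eigenvalue bounds with an integration/averaging argument to produce global positivity of some naturally associated line bundle, or cohomological vanishing, and then invoke Kodaira embedding, respectively a Campana--Koll\'ar--Miyaoka type rational-connectedness criterion. The quantitative control given by the inclusion sets, in particular by the tightest Li--Li set $K_{ll}(A)$, is what makes such a line of argument conceivable.

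For the blow-up part, one would attempt to construct an explicit deformation of the metric in a tubular neighborhood of the blown-up subvariety, track the effect of the blow-up substitution on the $\hat{H}$-spectrum of the resulting curvature tensor, and glue with the original metric away from the exceptional divisor via a partition of unity, using the inclusion sets to certify that positivity survives the interpolation. The main obstacle is of course that this is Yau's conjecture, open for decades: holomorphic sectional curvature is notoriously non-monotonic under standard geometric operations, and the gap between pointwise algebraic positivity of the curvature tensor and the required global geometric conclusions is vast. The tensor framework developed here should accordingly be regarded as an algebraic bookkeeping device, capable of clarifying partial results in the spirit of \cite{AHZ} and \cite{CH}, rather than as a decisive new wedge into the full conjecture.
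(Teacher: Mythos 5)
This statement is an open conjecture of Yau, not a result that the paper proves; the paper merely quotes it as motivation. As the paper itself explains, Yang \cite{Y} settled the projectivity and rational-connectedness parts, while the blow-up part remains open, and the paper's actual contribution in this direction is only to reprove the \emph{algebraic} step in the partial results of \cite{AHZ} and \cite{CH} (via Corollary 4.9 and Proposition \ref{p4.16}). You correctly recognize that there is no proof to give, and your speculative outline is appropriately hedged; just be aware that there is nothing to compare against, since the paper neither claims nor attempts a proof of this conjecture.
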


In \cite{Y}, Yang proves that compact K\"ahler manifolds with positive holomorphic sectional curvature must be projective and rationally connected, hence solves a majority of the above Yau's conjecture. The remaining part of the conjecture, namely, to find metrics with positive holomorphic sectional
curvature on the blow up of a projective manifold is still open. There are some progress towards this problem recently. In \cite{AHZ}, Alvarez-Heier-Zheng prove that the projectivization of any holomorphic vector bundle over a compact K\"ahler manifold with positive holomorphic sectional curvature admits a K\"ahler metric with positive holomorphic sectional curvature. In \cite{CH}, Chaturvedi-Heier prove that the total space of a compact holomorphic fibration
admits Hermitian metrics of positive holomorphic sectional curvature if its fiber and base do so. In general, it would be very interesting if more examples of Hermitian metrics with positive holomorphic sectional curvature can be constructed. In section 5, we get that the coefficient tensor of holomorphic sectional curvature is a 4-th order Hermitian tensor. This builds a bridge between complex differential geometry and the spectral theory of tensors. We then apply the criterions for Hermitian positive definiteness to reprove the algebraic part of the results by Alvarez-Heier-Zheng and Chaturvedi-Heier. 
We expect that there are further applications of the $\hat{H}$-eigenvalue in studying problems in differential geometry.

The structure of the paper is as follows. In section 2, we fix some standard notations, following \cite{QL}. In section 3, we introduce the $\hat{H}$-eigenvalues for a $n$-dimensional Hermitian tensor and show its relation with the Hermitian positive-definiteness. In section 4, we derive the $\hat{H}$-eigenvalues inclusion sets of a Hermitian tensor and introduce some Hermitian positive definite tensors. In section 5, we relate the $\hat{H}$-eigenvalues to holomorphic sectional curvature in complex differential geometry.\\

\noindent\textbf{Acknowledgements.}  The first author is supported by National Natural Science Foundation of
China (11901530) and Zhejiang Provincial Excellent Youth Science Foundation (LZYQ25A010001). The authors would like to thank Professors Lei Ni, Xiaolan Nie and Fangyang Zheng for many helpful discussions.

\section{Real and Hermitian tensors}
In this section, we give some preliminary on tensors. We refer to \cite{QL} for the terminology and notations.
\begin{defn} A $m$-th order complex tensor $A$ is a multi-array of entries $A=(a_{i_1\cdots i_m})$ with $a_{i_1\cdots i_k}\in \mathbb C$ for any $1\leq i_1\leq n_1, \cdots, 1\leq i_k\leq n_m$. 
	\end{defn}
A second order tensor is just a complex matrix. If $n_1=n_2=\cdots=n_m=n$, then $A$ is called an $n$-dimensional tensor, or a tensor of the same dimension $n$. $A$ is called a real tensor if $a_{i_1\cdots i_m}$ are all real numbers. For a real tensor of the same dimension $n$, the following concepts of definiteness are defined in \cite{QL}.
\begin{defn} \label{real}
Let $A=(a_{i_1\cdots i_m})$ be a $m$-th order $n$-dimensional real tensor. $A$ is said to be positive definite (semidefinite) if for any nonzero real vector $\mathbf{x}=(x_1,\cdots, x_n)^T\in \mathbb R^n\setminus \{0\}$, 
\begin{align}
\sum_{i_1,\cdots, i_m=1}^n a_{i_1i_2\cdots i_m} x_{i_1}x_{i_2}\cdots x_{i_m}>0 (\ \geq 0).\label{psd} \end{align}
The notion of negative definiteness (semidefiniteness) is defined analogously. 
\end{defn}
Clearly, a positive or negative definite tensor must be with even order $m$.  A \textbf{symmetric} tensor is a tensor $\mathcal B=(b_{i_1\cdots i_m})$ of the same dimension such that $$b_{i_1\cdots i_m}=b_{i_{\sigma(1)}\cdots i_{\sigma(m)}},$$ for any $1\leq i_1\cdots i_m\leq n$, where $\sigma\in S_n$ is any permutation of $\{1,\cdots, m\}$. Given any tensor $A$, its symmetrization $Sym(A)=(b_{i_1\cdots i_m})$ is defined to be 
$$b_{i_1\cdots i_m}=\dfrac{1}{n!}\sum_{\sigma\in S_n} a_{i_{\sigma(1)}\cdots i_{\sigma(m)}}.$$ Then $Sym(A)$ is a symmetric tensor. Also, $A$ is positive (negative) definite if and only if 
$Sym(A)$ is positive (negative) definite. Qi \cite{Q1} introduced the concepts of eigenvalue and H-eigenvalue of an $n$-dimensional tensor which are closely related to the property of positive definiteness. 
\begin{defn}  Given an $m$-th order $n$-dimensional tensor $A=(a_{i_1i_2\cdots i_m})$, if there is a nonzero complex vector $\mathbf{x}=(x_1,\cdots, x_n)^T\in \mathbb C^n\setminus \{0\}$ such that for each $1\leq i\leq n$, $$\sum_{i_2,i_3,\cdots, i_m=1}^n a_{ii_2\cdots i_m} x_{i_2}\cdots x_{i_m}=\lambda x_i^{m-1},$$ then $\lambda$ is called an eigenvalue of $A$ with the eigenvector $\mathbf{x}$. If $\mathbf{x}$ is a real vector, then $\lambda$ is called an H-eigenvalue with $H$-eigenvector $\mathbf{x}$. 
\end{defn}
The following result is obtained in \cite{Q1}. 
\begin{thm}[Qi] A symmetric real tensor with even order $m$ is positive definite (semidefinite) if and only if its H-eigenvalues are all positive (nonnegative). 
\end{thm}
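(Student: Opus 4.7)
The plan is to reduce the question to a constrained optimization problem and apply Lagrange multipliers. Since $m$ is even, the level set
\[
S=\Bigl\{\mathbf{x}\in\mathbb{R}^n : \sum_{i=1}^n x_i^m = 1\Bigr\}
\]
is compact (the function $\sum_i x_i^m$ is continuous, nonnegative, coercive, and vanishes only at $0$), so the polynomial $f(\mathbf{x})=\sum_{i_1,\ldots,i_m} a_{i_1\cdots i_m}x_{i_1}\cdots x_{i_m}$ attains its minimum and maximum on $S$.

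First I would check that the critical point condition for $f$ on $S$ is exactly the H-eigenvalue equation. Writing $g(\mathbf{x})=\sum_i x_i^m-1$ and using the symmetry of $A$, the Lagrange condition $\nabla f=\lambda\,\nabla g$ reads
\[
m\sum_{i_2,\ldots,i_m=1}^n a_{ii_2\cdots i_m}x_{i_2}\cdots x_{i_m} = \lambda\, m\, x_i^{m-1},\qquad 1\leq i\leq n,
\]
which after canceling $m$ is precisely the defining equation of an H-eigenvalue $\lambda$ with real H-eigenvector $\mathbf{x}$. Euler's identity for the homogeneous polynomial $f$ of degree $m$ then gives, upon multiplying the $i$-th equation by $x_i$ and summing over $i$, that $f(\mathbf{x})=\lambda\sum_i x_i^m=\lambda$ for any critical point $\mathbf{x}\in S$.

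Conversely, any H-eigenvector $\mathbf{x}\neq 0$ can be rescaled into $S$ via $\mathbf{x}\mapsto \mathbf{x}/(\sum_i x_i^m)^{1/m}$, because both sides of the H-eigenvalue equation are homogeneous of degree $m-1$ in $\mathbf{x}$ and $\sum_i x_i^m>0$ for $\mathbf{x}\neq 0$ when $m$ is even. Hence the set of H-eigenvalues is exactly the set of critical values of $f|_S$, and in particular is contained in $[\lambda_{\min},\lambda_{\max}]$ where $\lambda_{\min}$ and $\lambda_{\max}$, being attained at critical points, are themselves H-eigenvalues.

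Finally, by homogeneity, for any nonzero $\mathbf{x}\in\mathbb{R}^n$ with $t=(\sum_i x_i^m)^{1/m}>0$ one has $f(\mathbf{x})=t^m f(\mathbf{x}/t)$, so $f>0$ on $\mathbb{R}^n\setminus\{0\}$ is equivalent to $f>0$ on $S$, which is equivalent to $\lambda_{\min}>0$, which (by the previous paragraph) is equivalent to every H-eigenvalue being positive; replacing strict with weak inequalities gives the semidefinite case. The only subtle point is the two-way correspondence between H-eigenvalues and critical values of $f|_S$, which relies crucially on the evenness of $m$ so that $S$ is compact and the rescaling above is well-defined for every nonzero real vector.
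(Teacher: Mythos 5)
Your proof is correct, and it takes essentially the same Lagrange-multiplier approach that the paper uses for its complex analogue (Proposition \ref{3.1}): minimize the homogeneous form over the compact level set $\{\sum_i x_i^m=1\}$, identify the Lagrange critical-point condition with the H-eigenvalue equation, and extend by homogeneity. Note that the paper only cites Qi's real-tensor theorem from \cite{Q1} without reproving it; your argument is precisely the real specialization of the paper's own proof for $\hat{H}$-eigenvalues.
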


There are also other concepts of eigenvalues of tensors in the literature, for example $E$-eigenvalue, $Z$-eigenvalues, $U$-eigenvalues, $US$-eigenvalue, $C$-eigenvalues and $G$-eigenvalues (see \cite{JLZ}\cite{NQB}\cite{N}\cite{Q1}). They are in general different and can be applied to characterise the positive definiteness in different situations. The H-eigenvalue has the advantage that its defining equations are homogeneous and its inclusion set is easily checkable.

  Let $f(\mathbf{x})$ be a multi-variate homogenous degree $m$ polynomials with $\mathbf{x}=(x_1,\cdots, x_n)^T\in \mathbb C^n$. Then there is a unique symmetric $m$-th order tensors $A=(a_{i_1\cdots i_m})$ such that $f(\mathbf{x})=\sum_{i_1,\cdots, i_m=1}^n a_{i_1\cdots i_m} x_{i_1}\cdots x_{i_m}$. In \cite{JLZ},  Jiang-Li-Zhang generalizes this to homogeneous polynomials in complex conjugate variables which takes real values. More precisely, they define
 \begin{defn} 
 A $d$-th degree homogeneous conjugate complex polynomial is a polynomial in the following form
 $$f(\mathbf{x})=\sum_{k=0}^d\sum_{i_1,\cdots, i_k=1}^n\sum_{j_1, \cdots, j_{d-k}=1}^n a_{i_1\cdots i_k \bar{j}_1\cdots \bar{j}_{d-k}} x_{i_1}\cdots x_{i_k}\overline{x_{j_1}}\cdots \overline{x_{j_{d-k}}}$$ for any $\mathbf{x}=(x_1,\cdots, x_n)^T\in \mathbb C^n$. $f(\mathbf{v})$ is said to take real values if $f(\mathbf{v})\in \mathbb R$ for any complex vectors $\mathbf{v}$.
 \end{defn}
 Correspondingly, they introduce the following tensor.
 \begin{defn}
 A $2m$-order $n$-dimensional complex tensor $A=(a_{i_1\cdots i_m\bar{j}_1\cdots \bar{j}_m})$ is called a conjugate partial-symmetric (CPS) if it satisfies the following:
 \begin{align}
\overline{a_{i_1\cdots i_m\bar{j}_1\cdots \bar{j}_m}}&=a_{j_1\cdots j_m\bar{i}_1\cdots \bar{i}_m}\label{sym1} \\
 a_{i_1\cdots i_m\bar{j}_1\cdots \bar{j}_m}&=a_{i_{\sigma(1)}\cdots i_{\sigma(m)}\bar{j}_{\tau(1)}\cdots \bar{j}_{\tau(m)}} \label{symm}
 \end{align}
 for any $1\leq i_1, \cdots, i_m, j_1,\cdots,j_m\leq n$, where $\sigma, \tau$ are any two independent permutations of $\{1,\cdots, m\}$. 
 \end{defn}
\begin{remk}
The above notation of $2m$ order complex tensor is adopted here due to the convention in differential geometry. It has the advantage that the indices of $x_i$ and $\bar{x}_i$ are divided into two groups.
\end{remk}
 Given a $2m$-order $n$-dimensional complex tensor $A=(a_{i_1\cdots i_m\bar{j}_1\cdots \bar{j}_m})$, there is an associated degree $2m$ conjugate multi-variate polynomial $f(A)$ given by
 $$f(A)(\mathbf{x})=\sum_{i_1,\cdots, i_m, j_1, \cdots, j_m=1}^n a_{i_1\cdots i_m\bar{j}_1\cdots \bar{j}_m} x_{i_1}\cdots x_{i_m}\overline{x_{j_1}}\cdots\overline{x_{j_m}}$$ for any $\mathbf{x}=(x_1,\cdots, x_n)^T\in \mathbb C^n$. Jiang-Li-Zhang prove the following result in \cite{JLZ}.
\begin{thm}[Jiang-Li-Zhang] \label{thm2.8} $f(A)$ takes real values if and only if $A$ satisfies the condition (\ref{sym1}). Conversely, if a $2m$ degree homogenous conjugate multi-variate polynomial $f$ takes real values, there is a unique 2m-th order n-dimensional CPS tensors $A$ such that $f=f(A)$.
\end{thm}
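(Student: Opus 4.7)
The plan is to prove both the equivalence and the uniqueness of the CPS representation. First, I would dispatch the direction \eqref{sym1} $\Rightarrow$ $f(A)$ real by direct computation: conjugating $f(A)(\mathbf{x})$ yields a sum with $\overline{a_{i_1\cdots i_m\bar{j}_1\cdots \bar{j}_m}}$ multiplying $\overline{x_{i_1}}\cdots\overline{x_{i_m}}x_{j_1}\cdots x_{j_m}$; relabeling the dummy indices $(i_1,\ldots,i_m)\leftrightarrow(j_1,\ldots,j_m)$ and applying \eqref{sym1} identifies $\overline{a_{j_1\cdots j_m\bar{i}_1\cdots \bar{i}_m}}$ with $a_{i_1\cdots i_m\bar{j}_1\cdots \bar{j}_m}$, recovering $f(A)(\mathbf{x})$. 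So $\overline{f(A)(\mathbf{x})}=f(A)(\mathbf{x})$ for every $\mathbf{x}\in\mathbb C^n$.

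For the converse and the second part, the key input is the linear independence of the monomials $\{x^I\bar{x}^J\}$, with $I,J$ ranging over length-$m$ multi-indices on $\{1,\ldots,n\}$, as functions on $\mathbb C^n$. I would verify this by applying the Wirtinger operators $\partial^I\bar{\partial}^J$ at the origin, which extract the coefficient of $x^I\bar{x}^J$ up to an explicit multinomial factor. Consequently any $2m$-degree homogeneous conjugate polynomial $f$ has a unique monomial expansion $f(\mathbf{x})=\sum_{|I|=|J|=m}c_{I,J}\,x^I\bar{x}^J$, and reality of $f$ is equivalent to $c_{I,J}=\overline{c_{J,I}}$ for all $I,J$.

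To build the CPS tensor, I would spread each coefficient uniformly over the orderings of its multi-indices by setting $a_{i_1\cdots i_m\bar{j}_1\cdots \bar{j}_m}=c_{I,J}/(\mu(I)\mu(J))$, where $I=\{i_1,\ldots,i_m\}$, $J=\{j_1,\ldots,j_m\}$ as multisets and $\mu(I)$ denotes the number of distinct orderings of $I$. This uniform distribution makes the partial-symmetry condition \eqref{symm} automatic, while $c_{I,J}=\overline{c_{J,I}}$ translates directly into \eqref{sym1}. Summing over all orderings of the $i$'s and $j$'s recovers $f=f(A)$, yielding existence. Uniqueness follows because any CPS tensor representing $f$ must satisfy the same linear system dictated by the unique $c_{I,J}$, which together with \eqref{symm} pins down every entry.

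The substantive obstacle is the bookkeeping with multinomial coefficients when passing between the monomial basis and the tensor entries; the conjugation/relabeling argument and the polynomial identity principle for $\{x^I\bar{x}^J\}$ are otherwise routine. Once these combinatorial factors are tracked, the equivalence with \eqref{sym1}, existence, and uniqueness all drop out of matching coefficients in the expansion.
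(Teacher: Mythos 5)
Your argument for the ``if'' direction (condition~(\ref{sym1}) implies $f(A)$ real) and for the existence and uniqueness of the CPS representative is correct, and it is the standard coefficient-comparison proof; note that the paper itself only cites [JLZ] for this theorem and gives no proof, so there is no in-paper argument to measure against.

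The gap is that you never actually establish the ``only if'' half of the first sentence, namely that $f(A)$ real forces $A$ to satisfy~(\ref{sym1}), and as literally stated that implication is false for a general complex tensor $A$: with $m=2$, $n=2$, set $a_{12\bar 1\bar 1}=c$, $a_{21\bar 1\bar 1}=-c$ and all other entries zero; then $f(A)\equiv 0$ is certainly real, yet $\overline{a_{12\bar 1\bar 1}}=\bar c\neq 0=a_{11\bar 1\bar 2}$ once $c\neq 0$. Your machinery only shows that reality of $f$ is equivalent to $c_{I,J}=\overline{c_{J,I}}$, which is a statement about the \emph{sums} of entries of $A$ over orderings, not about the entries themselves. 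The ``only if'' does hold once one also assumes the partial-symmetry condition~(\ref{symm}): then $c_{I,J}=\mu(I)\mu(J)\,a_{(I)(J)}$, so $c_{I,J}=\overline{c_{J,I}}$ transfers to the individual entries and gives~(\ref{sym1}). You should add that hypothesis (or point out that the biconditional in the quoted statement implicitly requires it). With that caveat recorded, the remainder of your proof --- the uniform spreading of $c_{I,J}$ over orderings, the verification of~(\ref{sym1}) and~(\ref{symm}) for the resulting tensor, and uniqueness from $a_{(I)(J)}=c_{I,J}/(\mu(I)\mu(J))$ --- is sound.
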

In \cite{N}, to apply tensors to study quantum states, Ni introduces the concept of Hermitian tensors. 
\begin{defn} A $2m$-th order complex tensor $A=(a_{i_1\cdots i_m\bar{j}_1\cdots \bar{j}_m})\in \mathbb C^{n_1\times \cdots \times n_m\times n_1\times \cdots \times n_m}$ is called a Hermitian tensor if it satisfies $$\overline{a_{i_1\cdots i_m\bar{j}_1\cdots \bar{j}_m}}=a_{j_1\cdots j_m\bar{i}_1\cdots \bar{i}_m}$$ for any combination $(i_1\cdots i_mj_1\cdots j_m)$. 
When $n_1=\cdots=n_m=n$, it is a called a $2m$-th order $n$-dimensional Hermitian tensor.
\end{defn}
Clearly, a CPS tensor is a Hermitian tensor of the same dimension which satisfies the symmetric condition (\ref{symm}).

To study when a $2m$ degree homogenous real-valued conjugate multi-variate polynomials takes positive (nonnegative) values, we introduce the following definition of Hermitian positive definiteness (semi-definiteness).
\begin{defn} \label{complex}
Let $A=(a_{i_1\cdots i_m\bar{j}_1\cdots \bar{j}_m})$ be a $2m$-th order $n$-dimensional complex tensor. $A$ is said to be Hermitian positive definite (semidefinite) if for any nonzero complex vector $\mathbf{x}=(x_1,\cdots, x_n)^T\in \mathbb C^n\setminus \{0\}$, $$f(A)(\mathbf{x})=\sum_{i_1,\cdots, i_m, j_1, \cdots, j_m=1}^n a_{i_1\cdots i_m\bar{j}_1\cdots \bar{j}_m} x_{i_1}\cdots x_{i_m}\overline{x_{j_1}}\cdots\overline{x_{j_m}}>0 (\geq 0).$$
\end{defn}
By Theorem \ref{thm2.8}, a Hermitian positive definite or semidefinite tensor must be a Hermitian tensor. When $A$ is a real Hermitian tensor, we compare the two notions of positive definiteness.
\begin{prop} \label{diff}
Let $A$ be a $2m$-th order $n$-dimensional real Hermitian tensor. If $A$ is Hermitian positive definite (semidefinite), then $A$ is positive definite (semidefinite). When $m=1$, $A$ is Hermitian positive definite (semidefinite) if and only if it is positive definite (semidefinite).
\end{prop}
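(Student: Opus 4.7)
The plan is to prove the two implications separately. For the forward direction, suppose $A$ is Hermitian positive definite (semidefinite). Since $\mathbb{R}^n\setminus\{0\}\subset \mathbb{C}^n\setminus\{0\}$, the defining inequality $f(A)(\mathbf{x})>0$ (resp.\ $\geq 0$) holds in particular for every nonzero real vector $\mathbf{x}$. For such $\mathbf{x}$ we have $\overline{x_{j_k}}=x_{j_k}$ for each $k$, so
\[
f(A)(\mathbf{x})=\sum_{i_1,\cdots,i_m,j_1,\cdots,j_m=1}^n a_{i_1\cdots i_m\bar{j}_1\cdots \bar{j}_m}\,x_{i_1}\cdots x_{i_m}x_{j_1}\cdots x_{j_m}.
\]
Regarding $A$ as a $2m$-th order real tensor by concatenating the indices $(i_1,\dots,i_m,j_1,\dots,j_m)$, this expression is precisely the polynomial form appearing in Definition \ref{real} (with order $2m$), so $A$ is positive definite (semidefinite) as a real tensor.

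For the converse when $m=1$, note first that a second-order $n$-dimensional real Hermitian tensor is an $n\times n$ matrix $A=(a_{ij})$ with real entries satisfying $a_{ij}=\overline{a_{ji}}=a_{ji}$; hence $A$ is a real symmetric matrix. Given any nonzero $\mathbf{x}\in\mathbb{C}^n$, write $\mathbf{x}=\mathbf{u}+i\mathbf{v}$ with $\mathbf{u},\mathbf{v}\in\mathbb{R}^n$ and expand
\[
f(A)(\mathbf{x})=\sum_{i,j}a_{ij}(u_i+iv_i)(u_j-iv_j)=\mathbf{u}^T A\mathbf{u}+\mathbf{v}^T A\mathbf{v}+i\bigl(\mathbf{v}^T A\mathbf{u}-\mathbf{u}^T A\mathbf{v}\bigr).
\]
Symmetry of $A$ forces $\mathbf{v}^T A\mathbf{u}=\mathbf{u}^T A\mathbf{v}$, so the imaginary part vanishes and $f(A)(\mathbf{x})=\mathbf{u}^T A\mathbf{u}+\mathbf{v}^T A\mathbf{v}$. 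Since $\mathbf{x}\neq 0$ implies $(\mathbf{u},\mathbf{v})\neq(0,0)$, real positive definiteness (semidefiniteness) of $A$ yields $f(A)(\mathbf{x})>0$ (resp.\ $\geq 0$), proving Hermitian positive definiteness (semidefiniteness).

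The computations are essentially routine; the only conceptual point worth flagging is that the two notions of positive definiteness in the statement refer to different polynomial forms, one of degree $m$ in a single set of real variables and one of degree $2m$ in both $x_i$ and $\overline{x_i}$, and that the former is recovered from the latter by restricting to $\mathbb{R}^n$. The real obstacle, which is not an obstacle to this proposition but rather the reason the converse requires the hypothesis $m=1$, is that the cancellation of imaginary cross-terms via symmetry is a purely bilinear (matrix) phenomenon; for $m>1$ the higher-order mixed terms between $\mathbf{u}$ and $\mathbf{v}$ no longer cancel against each other, which is exactly why Hermitian positive definiteness is strictly stronger than positive definiteness in higher order, as the authors announce will be shown by explicit example in this section.
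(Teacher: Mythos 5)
Your proof is correct and follows essentially the same route as the paper's: the forward direction is immediate from $\mathbb{R}^n\subset\mathbb{C}^n$, and for $m=1$ the paper likewise decomposes into real and imaginary parts, writing $f(A)(\mathbf{x})=\sum_{i,j}a_{i\bar j}\bigl(\operatorname{Re}(x_i)\operatorname{Re}(x_j)+\operatorname{Im}(x_i)\operatorname{Im}(x_j)\bigr)$, which is exactly your $\mathbf{u}^T A\mathbf{u}+\mathbf{v}^T A\mathbf{v}$. You simply make explicit the symmetry-driven cancellation of the cross term that the paper leaves implicit; the substance is identical.
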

\begin{proof}
The first part follows directly from the definitions. When $m=1$, $A=(a_{i\bar{j}})$ with $a_{i\bar{j}}=a_{j\bar{i}}$ being real numbers. For $\mathbf{x}=(x_1,\cdots, x_n)^T\in \mathbb C^n\setminus \{0\}$, 
$$f(A)(\mathbf{x})=\sum_{i,j=1}^n a_{i\bar{j}}x_i\overline{x_{j}}=\sum_{i,j=1}^n (a_{i\bar{j}}Re(x_i)Re(x_{j})+a_{i\bar{j}}Im(x_i)Im(x_{j})).$$
So $f(A)(\mathbf{x})> 0(\geq 0)$ if $A$ is positive definite (semidefinite).
\end{proof}
The following example shows that when $m>1$, positive definiteness may not imply Hermitian positive definiteness.
\begin{exam}
Let $A$ be a $4$-th order 2-dimensional CPS tensor defined by: $a_{11\bar{1}\bar{1}}=a_{22\bar{2}\bar{2}}=1, a_{11\bar{2}\bar{2}}=a_{22\bar{1}\bar{1}}=2$, $a_{ij\bar{k}\bar{l}}=0$, otherwise. Then for $\mathbf{x}=(x_1,x_2)^T\in \mathbb R^2\setminus \{0\}$, $$f(A)(\mathbf{x})=x_1^4+4x_1^2x_2^2+x_2^4>0.$$ When $\mathbf{x}=(x_1,x_2)^T\in \mathbb C^2\setminus \{0\}$, $$f(A)(\mathbf{x})=|x_1|^4+2x_1^2\overline{x_2}^2+2x_2^2\overline{x_1}^2+|x_2|^4$$ may not be positive. For example, when $\mathbf{x}=(\sqrt{-1},1)^T, f(A)(\mathbf{x})=-2<0$
\end{exam}

\section{$\hat{H}$-eigenvalues}
In this section, we define $\hat{H}$-eigenvalue for $n$-dimensional complex tensors and relate it to Hermitian positive definiteness. As Hermitian positive definiteness is stronger than positive definiteness, it is not sufficient to study the H-eigenvalue in this situation. We introduce the following.
\begin{defn}
		Let $A=(a_{i_1\cdots i_m\bar{j}_1\cdots \bar{j}_m})$ be a $2m$-th order $n$-dimensional complex tensor. We call that $\lambda\in \mathbb C$ is an $\hat{H}$-eigenvalue of $A$ with $\hat{H}$-eigenvector $\mathbf{x}$ if there is a nonzero complex vector $\mathbf{x}=(x_1,\cdots, x_n)^T\in \mathbb C^n\setminus \{0\}$ such that 
$$\sum_{i_2,\cdots, i_m, j_1, \cdots, j_m=1}^n a_{ii_2\cdots i_m\bar{j}_1\cdots \bar{j}_m} x_{i_2}\cdots x_{j_m}\overline{x_{j_1}}\cdots\overline{x_{j_m}}=\lambda \bar{x}_{i}|x_{i}|^{2m-2}, {\forall}\  1\leq i\leq n.$$ 
	\end{defn}
It follows from the definition that if $\mathbf{x}$ is an $\hat{H}$-eigenvector of $\lambda$, then $k\mathbf{x}$ is also an $\hat{H}$-eigenvector of $\lambda$ for any $k\in \mathbb C\setminus \{0\}$. If $\mathbf{x}$ is a real vector, it is at the same time an $H$-eigenvector of $\lambda$. However, $\hat{H}$-eigenvectors may not be real in general and the $\hat{H}$-eigenvalues differ from H-eigenvalues even for Hermitian tensors. For $\hat{H}$-eigenvalues of Hermitian tensors, we have
\begin{lem}
If $A$ is a Hermitian tensor, then any $\hat{H}$-eigenvalue $\lambda$ of $A$ must be real.
\end{lem}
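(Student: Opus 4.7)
The plan is to contract the defining eigenvalue equation with $x_i$, sum over $i$, and exploit the realness of the induced conjugate polynomial $f(A)$ guaranteed by Theorem \ref{thm2.8} when $A$ is Hermitian.

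First, I would multiply the $i$-th component of the $\hat{H}$-eigenvalue equation
$$\sum_{i_2,\cdots, i_m, j_1, \cdots, j_m=1}^n a_{ii_2\cdots i_m\bar{j}_1\cdots \bar{j}_m}\, x_{i_2}\cdots x_{i_m}\overline{x_{j_1}}\cdots\overline{x_{j_m}}=\lambda\, \bar{x}_{i}|x_{i}|^{2m-2}$$
by $x_i$ and sum over $i$ from $1$ to $n$. On the left this reassembles the full sum
$$\sum_{i_1,\cdots, i_m, j_1, \cdots, j_m=1}^n a_{i_1 i_2\cdots i_m\bar{j}_1\cdots \bar{j}_m}\, x_{i_1}x_{i_2}\cdots x_{i_m}\overline{x_{j_1}}\cdots\overline{x_{j_m}} = f(A)(\mathbf{x}),$$
while on the right, using $x_i \bar{x}_i = |x_i|^2$, one gets
$$\lambda \sum_{i=1}^n |x_i|^{2m}.$$

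Next I would invoke Theorem \ref{thm2.8}: because $A$ is Hermitian it satisfies the conjugate symmetry condition (\ref{sym1}), hence $f(A)(\mathbf{x})\in\mathbb{R}$ for every $\mathbf{x}\in\mathbb{C}^n$. Since $\mathbf{x}\neq 0$, the positive quantity $\sum_{i=1}^n |x_i|^{2m}$ is a nonzero real number, so solving for $\lambda$ yields
$$\lambda = \frac{f(A)(\mathbf{x})}{\sum_{i=1}^n |x_i|^{2m}}\in\mathbb{R}.$$

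There is no substantive obstacle here; the only thing to be mindful of is that the contraction with $x_i$ correctly recovers the full polynomial $f(A)(\mathbf{x})$ (and not some other partial sum), and that $\sum |x_i|^{2m}$ is strictly positive for $\mathbf{x}\neq 0$, which is immediate. If one preferred not to cite Theorem \ref{thm2.8}, realness of $f(A)(\mathbf{x})$ could be verified directly by conjugating the sum and using $\overline{a_{i_1\cdots i_m\bar{j}_1\cdots \bar{j}_m}}=a_{j_1\cdots j_m\bar{i}_1\cdots \bar{i}_m}$ together with a relabelling of indices.
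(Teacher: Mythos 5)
Your proof is correct and follows essentially the same route as the paper: contract the eigenvalue equation with $x_i$ to recover $f(A)(\mathbf{x})=\lambda\sum_i|x_i|^{2m}$, observe that the Hermitian condition forces $f(A)(\mathbf{x})\in\mathbb{R}$ (the paper verifies this directly by conjugating the sum, which is the alternative you mention rather than citing Theorem \ref{thm2.8}), and divide by the positive quantity $\sum_i|x_i|^{2m}$.
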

\begin{proof}
Assume that $\mathbf{x}$ is a $\hat{H}$-eigenvector of $\lambda$. Then 
\begin{align*}
		f(A)(\mathbf x)=\sum_{i, i_2,\cdots, i_m, j_1, \cdots, j_m=1}^n a_{ii_2\cdots i_m\bar{j}_1\cdots \bar{j}_m} x_i x_{i_2}\cdots x_{j_m}\overline{x_{j_1}}\cdots\overline{x_{j_m}}=\lambda\sum_{i=1}^n |x_{i}|^{2m}.\end{align*} 
	As $A$ is Hermitian,
	 $$\overline{f(A)(\mathbf x)}=\sum_{i_1,\cdots, i_m, j_1, \cdots, j_m=1}^n \overline{a_{i_1\cdots i_m\bar{j}_1\cdots \bar{j}_m}} \bar{x}_{i_1}\cdots \bar{x}_{j_m}{x_{j_1}}\cdots{x_{j_m}}=f(A)(\mathbf x).$$ So $f(A)(\mathbf x)$ is real and $\lambda=\dfrac{f(A)(\mathbf x)}{\sum_{i=1}^n|x_i|^{2m}}$ is also real.
	\end{proof}
Given a $2m$-th order $n$-dimensional Hermitian tensor $A$, define its conjugate partial symmetrization tensor $S_{A}=(b_{i_1\cdots i_m\bar{j}_1\cdots \bar{j}_m})$ by
\begin{align*}
b_{i_1\cdots i_m\bar{j}_1\cdots \bar{j}_m}=\dfrac{1}{(n!)^2}\sum_{\sigma, \tau\in S_n} a_{i_{\sigma(1)}\cdots i_{\sigma(m)}\bar{j}_{\tau(1)}\cdots \bar{j}_{\tau(m)}} ,
\end{align*}
	where $S_m$ is the permutation group of $m$ elements. Then $S_{A}$ is a CPS tensor. Also
 \begin{align} f(A)(\mathbf x)=\sum_{i_1, i_2,\cdots, i_m, j_1, \cdots, j_m=1}^n a_{i_1i_2\cdots i_m\bar{j}_1\cdots \bar{j}_m} x_{i_1} x_{i_2}\cdots x_{j_m}\overline{x_{j_1}}\cdots\overline{x_{j_m}}\\ \notag
=\sum_{i_1, i_2,\cdots, i_m, j_1, \cdots, j_m=1}^na_{\sigma(i_{1})\cdots \sigma(i_{m})\tau(\bar{j}_{1})\cdots \tau(\bar{j}_{m})}x_{i_1} x_{i_2}\cdots x_{j_m}\overline{x_{j_1}}\cdots\overline{x_{j_m}}=f(S_{A})(\mathbf x). \end{align}
So $A$ is Hermitian positive definite if and only if $S_{A}$ is Hermitian positive definite. We prove the following characterization of Hermitian positive definite tensors.
\begin{prop} \label{3.1}
Let $A$ be a $2m$-th order $n$-dimensional Hermitian tensor. Then there always exists $\hat{H}$-eigenvalues of $S_{A}$. Also, $A$ is Hermitian positive semidefinite (definite) if and only if the $\hat{H}$-eigenvalues of $S_{A}$ are all nonnegative (positive). 
\end{prop}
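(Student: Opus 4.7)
The plan is to deduce the proposition in three steps: reduce everything to the CPS tensor $S_A$, produce $\hat H$-eigenvalues of $S_A$ by constrained optimization, and then read off the sign characterization from the extremal values.

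The reduction is essentially free. The displayed identity just before the proposition gives $f(A)(\mathbf x)=f(S_A)(\mathbf x)$, so $A$ is Hermitian positive (semi)definite iff $S_A$ is; and since $S_A$ is CPS, Theorem \ref{thm2.8} ensures $f(S_A)$ is real-valued on $\mathbb C^n$.

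For existence, I would minimize the continuous real function $f(S_A)$ on the compact real hypersurface
\[
\Sigma=\Bigl\{\mathbf x\in\mathbb C^n : g(\mathbf x):=\sum_{i=1}^n |x_i|^{2m}=1\Bigr\}\subset\mathbb R^{2n}.
\]
The Wirtinger derivative $\partial g/\partial x_i = m|x_i|^{2m-2}\bar x_i$ is nonvanishing on $\Sigma$ (some $x_i\neq 0$ there), so $\Sigma$ is a smooth real hypersurface in $\mathbb C^n\setminus\{0\}$ and the Lagrange multiplier rule applies at the minimizer $\mathbf x^{\ast}$. Since $f(S_A)$ is real-valued, $\overline{\partial f(S_A)/\partial x_i}=\partial f(S_A)/\partial\bar x_i$, so the real Lagrange condition collapses to the single complex equation $\partial f(S_A)/\partial x_i=\mu\,\partial g/\partial x_i$ for some $\mu\in\mathbb R$. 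The CPS symmetry of $S_A$ in its holomorphic indices lets the $m$ derivative terms arising from $\partial_{x_i}(x_{i_1}\cdots x_{i_m})$ be combined into a single factor of $m$, which cancels the $m$ on the right and reproduces exactly the $\hat H$-eigenvalue equation for $\mathbf x^{\ast}$ with eigenvalue $\mu$.

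Once existence holds, both directions of the sign statement are short. If every $\hat H$-eigenvalue of $S_A$ is $\geq 0$ (resp.\ $>0$), then so is the minimum value $\mu_{\min}=f(S_A)(\mathbf x^{\ast})$; the scaling identity $f(S_A)(t\mathbf x)=|t|^{2m}f(S_A)(\mathbf x)$ for $t\in\mathbb C\setminus\{0\}$ then forces $f(S_A)(\mathbf x)\geq 0$ ($>0$) for every nonzero $\mathbf x$, and the reduction step transfers this conclusion back to $A$. Conversely, given any $\hat H$-eigenvector $\mathbf x$ of $S_A$ with eigenvalue $\lambda$, multiplying the eigenvalue equation by $x_i$ and summing over $i$ gives $\lambda\sum_i|x_i|^{2m}=f(S_A)(\mathbf x)=f(A)(\mathbf x)$, so the sign of $\lambda$ is forced by the sign of $f(A)(\mathbf x)$. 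I expect the main technical care to go into the Lagrange step: verifying that $\Sigma$ is a smooth hypersurface (needed so that a Lagrange multiplier exists) and tracking the $m$-fold symmetrization so that the Wirtinger Lagrange condition collapses cleanly onto the $\hat H$-eigenvalue equation — which is precisely why one has to symmetrize $A$ into $S_A$ before searching for $\hat H$-eigenvalues in the first place.
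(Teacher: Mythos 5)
Your proof is correct and follows essentially the same route as the paper: reduce to $S_A$ via $f(A)=f(S_A)$, minimize on the compact set $\{\sum|x_i|^{2m}=1\}$, apply Lagrange multipliers in Wirtinger form using the CPS symmetry to produce an $\hat H$-eigenvalue, and then read off the sign equivalences from the extremal value and from the identity $\lambda\sum_i|x_i|^{2m}=f(S_A)(\mathbf x)$. The only addition beyond the paper's argument is that you explicitly verify the constraint surface is a smooth hypersurface and that the real Lagrange condition collapses to a single Wirtinger equation, both of which the paper takes for granted.
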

\begin{proof}
Set $B=\{\mathbf{x}=(x_1,\cdots, x_n)^T\in \mathbb C^n |\sum_{i=1}^{n}|x_i|^{2m}=1\}$. Then $B$ is a bounded closed set in $\mathbb C^n$, hence is compact. As $f(S_{A})(\mathbf{x})=f(A)(\mathbf{x})$ depends continuously on $\mathbf{x}$, $f(S_{A})|_B$ obtains its minimal value at some vector $\mathbf{v}=(v_1, \cdots, v_n)\in B$. Denote $S_{A}=(b_{i_1\cdots i_m\bar{j}_1\cdots \bar{j}_m})$. 
View $x_i, \bar{x}_i$ as independent variables as in complex analysis. According to the Lagrange multiplier method, at $\mathbf{v}$ there is a $\lambda\in \mathbb R$ such that
$$\dfrac{\partial f(S_{A})(\mathbf{x})}{\partial x_i}(\mathbf{v})=\lambda \dfrac{\partial (|x_1|^{2m}+\cdots+|x_n|^{2m}-1)}{\partial x_i}(\mathbf{v}),$$
which gives that 
$$m\sum_{i_2,\cdots, i_m, j_1, \cdots, j_m=1}^nb_{ii_2\cdots i_m\bar{j}_1\cdots \bar{j}_m}v_{i_2}\cdots v_{i_m}\overline{v_{j_1}}\cdots \overline{v_{j_m}}=m\lambda \bar{v}_{i} |v_{i}|^{2m-2},$$
for $1\leq i\leq n$, where the conjugate partial symmetry of $S_{A}$ is used above. So $\mathbf{v}$ is an $\hat{H}$-eigenvector of $S_{A}$ with $\hat{H}$-eigenvalue $\lambda$. Therefore, $\hat{H}$-eigenvalues of $S_{A}$ always exist. Furthermore, if $\lambda\geq 0$ for any $\hat{H}$-eigenvalue $\lambda$ of $S_{A}$, then at the minimum point $\mathbf{v}$ of $f(A)|_B=f(S_{A})|_B$, $$f(A)(\mathbf{v})=f(S_{A})(\mathbf{v})=\lambda(\sum_{i=1}^n|v_{i}|^{2m})\geq 0.$$ 
 Then $f(A)(\mathbf{x})\geq 0$ for any vector $\mathbf{x}\in B$. Now the homogeneity of $f(A)$ means that for any $t\in \mathbb R$,$f(A)(t\mathbf{x})=t^{2m}f(A)(\mathbf{x})$. So we get $f(A)(\mathbf{x})\geq 0$ for any $\mathbf{x}\in \mathbb C^n$.
	
Conversely, assume that $A$ is Hermitian positive semidefinite and $\lambda$ is an $\hat{H}$-eigenvalue of $S_{A}$ with $\hat{H}$-eigenvector $\mathbf{x}$. Then $$0\leq f(A)(\mathbf{x})=f(S_{A})(\mathbf{x})=\lambda(\sum_{i=1}^n|x_{i}|^{2m}).$$ So $\lambda\geq 0$. 

 The case of positive definiteness follows similarly.
\end{proof}
 
\begin{proof}[Proof of Theorem 1.2]  When $A$ is a CPS tensor, $S_{A}=A$. Theorem 1.2 follows from Proposition \ref{3.1}.
\end{proof}
\begin{remk}
In \cite{JLZ}, Jiang-Li-Zhang introduce C-eigenvalues of CPS tensors. It is the Hermitian analogue of Z-eigenvalue of real tensors. They also obtain several properties and applications of C-eigenvalues. As the defining equations of C-eigenvalues are not real homogeneous, the problem to find inclusion sets of C-eigenvalues would be more involved.  
\end{remk}
The following example demonstrates the application of $\hat{H}$-eigenvalues.
\begin{exam}
Let $A$ be a $4$-th order 2-dimensional CPS tensor defined by
 $$a_{11\bar{1}\bar{1}}=a_{22\bar{2}\bar{2}}=1, a_{11\bar{2}\bar{2}}=\sqrt{-1}, a_{22\bar{1}\bar{1}}=-\sqrt{-1}, a_{ij\bar{k}\bar{l}}=0.\ \text{otherwise}$$
The conjuate polynomial given by $A$ is: $$f(A)(\mathbf{x})=|x_1|^4+\sqrt{-1}x_1^2\overline{x_2}^2-\sqrt{-1}x_2^2\overline{x_1}^2+|x_2|^4$$ for $\mathbf{x}=(x_1,x_2)^T\in \mathbb C^2.$ Easy calculation gives that there are three $\hat{H}$-eigenvalues of $A$ which are 0, 1, 2. The
$\hat{H}$-eigenvectors for 0 are $\mathbf{a}=k(e^{\frac{\pi}{4}\sqrt{-1}},1)^T, k\in \mathbb C$; for $1$ are $\mathbf{b}=k(1,0)^T$, and $\mathbf{b'}=k(0,1)^T, k\in \mathbb C$; for $2$ are $\mathbf{c}=k(e^{-\frac{\pi}{4}\sqrt{-1}},1)^T, k\in \mathbb C$. Therefore, by Theorem 1.2, $A$ is a Hermitian positive semidefinite CPS tensor and $f(A)(\mathbf{x})\geq 0$ for any $\mathbf{x}\in \mathbb C^2$.
\end{exam}
In \cite{Q1}, Qi develops the method of characteristic polynomials of symmetric tensor to find the H-eigenvalues of tensors. Among other results, he shows that the maximal total number of H-eigenvalues of a $m$-th order $n$-dimensional real tensor are $n(m-1)^{n-1}$. Using the same method of Qi, we can deduce that for a $2m$-th order $n$-dimensional complex tensor, the total number of its $\hat{H}$-eigenvalues cannot exceed $2n(2m-1)^{2n-1}$. However, the characteristic polynomial of $\hat{H}$-eigenvalues would be more complicated. 

\section{Inclusion sets and Hermitian PSD tensors}
Given a complex tensor $A$, denote the set of all its $\hat{H}$-eigenvalues by $\sigma_h(A)$. If there is a set $K(A)$ such that $\sigma_h(A)\subset K(A)$, then $K(A)$ is called an \textbf{inclusion set} of the $\hat{H}$-eigenvalues. In this section, we discuss several checkable inclusion sets of the $\hat{H}$-eigenvalues. They are essentially generalizations of inclusion sets from H-eigenvalues of real tensors to $\hat{H}$-eigenvalues of complex tensors. As consequences, we give several criterions for Hermitian positive definiteness of a Hermitian tensor.

The first type of inclusion set is a Ger\u{s}gorin type inclusion set first introduced by Qi \cite{Q1} for real tensors. It is a generalization of the Ger\u{s}gorin set of matrix eigenvalues. We show that it is also suitable for $\hat{H}$-eigenvalues of complex tensors.
\begin{defn}\label{d4.1} Assume that $A=(a_{i_1\cdots i_{m}\bar{j}_1\cdots \bar{j}_m})$ is a complex $2m$-th order $n$-dimensional tensor. For each $1\leq i\leq n$, denote $r_i(A)=\sum_{(i_2\cdots i_mj_1\cdots j_{m})\neq (i\cdots i)}|a_{ii_2\cdots i_{m}\bar{j}_1\cdots \bar{j}_m}|$. Define $$K_{ger}(A)=\bigcup _{i=1}^n \{\lambda\in \mathbb C: |\lambda-a_{i\cdots i\bar{i}\cdots\bar{i}}|\leq r_i(A)\}.$$
\end{defn}
We have
\begin {prop} \label{4.01}
$K_{ger}(A)$ is an inclusion set of $\hat{H}$-eigenvalues of $A$.
\end{prop}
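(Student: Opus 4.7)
The plan is to carry out the classical Ger\u{s}gorin-disk argument, adapted from Qi's proof in the real H-eigenvalue case to the present $\hat{H}$-eigenvalue setting. Let $\lambda\in\sigma_h(A)$ and fix a corresponding $\hat{H}$-eigenvector $\mathbf{x}=(x_1,\ldots,x_n)^{T}\in\mathbb{C}^n\setminus\{0\}$. First I would pick an index $k$ at which $|x_k|=\max_{1\le i\le n}|x_i|$; since $\mathbf{x}\neq 0$ we have $|x_k|>0$, and this maximizing coordinate is what will dominate all others in the estimate that follows.

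Next I would write out the $k$-th defining equation of an $\hat{H}$-eigenpair and isolate the single ``diagonal'' index tuple $(i_2,\ldots,i_m,j_1,\ldots,j_m)=(k,\ldots,k)$. Its contribution on the left-hand side is $a_{k\cdots k\bar{k}\cdots\bar{k}}\,x_k^{m-1}\bar{x}_k^{m}=a_{k\cdots k\bar{k}\cdots\bar{k}}\,\bar{x}_k|x_k|^{2m-2}$, which combines with the right-hand side $\lambda\bar{x}_k|x_k|^{2m-2}$ to yield
$$(\lambda-a_{k\cdots k\bar{k}\cdots\bar{k}})\,\bar{x}_k\,|x_k|^{2m-2}=\sum_{(i_2\cdots j_m)\neq(k\cdots k)} a_{ki_2\cdots i_m\bar{j}_1\cdots\bar{j}_m}\,x_{i_2}\cdots x_{i_m}\,\overline{x_{j_1}}\cdots\overline{x_{j_m}}.$$

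Then I would take moduli on both sides. On the right, the triangle inequality together with the bounds $|x_{i_s}|,|x_{j_t}|\le|x_k|$ applied to each of the $2m-1$ variable factors produces a majorant of $r_k(A)\,|x_k|^{2m-1}$, using the very definition of $r_k(A)$ from Definition \ref{d4.1}. On the left, $|\bar{x}_k|\,|x_k|^{2m-2}=|x_k|^{2m-1}$. Dividing through by $|x_k|^{2m-1}>0$ gives $|\lambda-a_{k\cdots k\bar{k}\cdots\bar{k}}|\le r_k(A)$, placing $\lambda$ in the $k$-th Ger\u{s}gorin disk and hence in $K_{ger}(A)$.

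There is no substantive obstacle; the only point demanding care is the degree bookkeeping. The $\hat{H}$-eigenvalue equation is homogeneous of total degree $2m-1$ in the coordinates $x_1,\ldots,x_n,\bar x_1,\ldots,\bar x_n$ on both sides, so normalizing by $|x_k|^{2m-1}$ is the right move and the complex conjugations on some factors are harmless once one passes to moduli. Beyond that, the argument is a direct transcription of Qi's Ger\u{s}gorin-type proof for real tensors.
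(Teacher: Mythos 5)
Your proof is correct and follows the same Ger\u{s}gorin-type argument as the paper: isolate the maximizing coordinate $k$, extract the diagonal term $a_{k\cdots k\bar{k}\cdots\bar{k}}\bar{x}_k|x_k|^{2m-2}$ from the $k$-th defining equation, and bound the remainder by $r_k(A)|x_k|^{2m-1}$ via the triangle inequality. The degree-bookkeeping observation you add is accurate and matches the paper's computation.
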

\begin{proof}
	Let $\lambda$ be an $\hat{H}$-eigenvalues of $A$ with a $\hat{H}$-eigenvector $\mathbf{x}=(x_1,\cdots,x_n)^T$. Assume that $|x_k|=\max\{|x_1|,\cdots,|x_n|\}>0$. Then
	$$(\lambda-a_{k\cdots k\bar{k}\cdots\bar{k}})\bar{x}_k|x_k|^{2m-2}=\sum_{(i_2\cdots i_mj_1\cdots j_{m})\neq (k\cdots k)} a_{ki_2\cdots i_m\bar{j}_1\cdots \bar{j}_m} x_{i_2}\cdots x_{i_m}\overline{x_{j_1}}\cdots\overline{x_{j_m}}.$$
	So 
	\begin{align*}
	|\lambda-a_{k\cdots k\bar{k}\cdots\bar{k}}|&\leq \sum_{(i_2\cdots i_mj_1\cdots j_{m})\neq (k\cdots k)} |a_{ki_2\cdots i_m\bar{j}_1\cdots \bar{j}_m}|\dfrac{x_{i_2}\cdots x_{i_m}\overline{x_{j_1}}\cdots\overline{x_{j_m}}|}{|x_k|^{2m-1}}\\
	&\leq \sum_{(i_2\cdots i_mj_1\cdots j_{m})\neq (k\cdots k)}|a_{ki_2\cdots i_m\bar{j}_1\cdots \bar{j}_m}|=r_k(A).\end{align*}
	Therefore, $\lambda$ lies in $K_{ger}(A)$ and $K_{ger}(A)$ is an inclusion set of the $\hat{H}$-eigenvalues of $A$.
\end{proof}
Next, we generalize the Li-Li-Kong type inclusion set from real tensors to complex tensors.
In \cite{LLK}, Li-Li-Kong introduce a new inclusion set of H-eigenvalues for real tensors and show that it is tighter than the Ger\u{s}gorin type inclusion set. We introduce an analogue set for complex tensors.
\begin{defn}\label{d4.3}
	Let $A=(a_{i_1\cdots i_{m}\bar{j}_1\cdots \bar{j}_m})$ be a complex $2m$-th order $n$-dimensional tensor. Define
	$$K_{llk}(A)=\bigcup_{i,j=1,i\neq j}^nK_{i,j}(A),$$
	where for $i\neq j$, $$K_{i,j}(A)=\left\{\lambda \in \mathbb C :(|\lambda -a_{i\cdots i\bar{i}\cdots\bar{i}}|-r_{i}^{j}(A))|\lambda-a_{j\cdots j\bar{j}\cdots\bar{j}}|\leq|a_{ij\cdots j\bar{j}\cdots\bar{j}}|r_{j}(A) \right\}$$
and \begin{align*}
 r_{i}^{j}(A)=r_{i}(A)-|a_{ij\cdots j\bar{j}\cdots\bar{j}}|.\end{align*}
\end{defn}
We have
\begin{prop} \label{4.4}
	$\sigma_h(A)\subseteq K_{llk}(A)\subseteq K_{ger}(A)$. Therefore, $K_{llk}(A)$ is an $\hat{H}$-eigenvalues inclusion set which is tighter than $K_{ger}(A)$.
\end{prop}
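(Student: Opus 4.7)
My plan is to prove the two containments separately. For $\sigma_h(A)\subseteq K_{llk}(A)$, I would imitate the Brauer-type argument that Li-Li-Kong use for H-eigenvalues of real tensors, but adapted to the $\hat{H}$-eigenvalue equations (which involve the extra conjugate factors). For $K_{llk}(A)\subseteq K_{ger}(A)$, I would argue by contrapositive: if $\lambda$ avoids every Ger\u{s}gorin disk, then $\lambda$ avoids every $K_{i,j}(A)$ as well.

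Concretely, let $\lambda\in\sigma_h(A)$ with $\hat{H}$-eigenvector $\mathbf{x}=(x_1,\dots,x_n)^T$. Choose indices $p,q$ with $p\neq q$ such that $|x_p|=\max_i|x_i|>0$ and $|x_q|=\max_{i\neq p}|x_i|$. Writing the $\hat{H}$-eigenvalue equation at $i=p$ and separating off the single term with multi-index $(q\cdots q\bar q\cdots \bar q)$, I get
\begin{align*}
(\lambda-a_{p\cdots p\bar p\cdots\bar p})\,\bar x_p|x_p|^{2m-2}
= a_{pq\cdots q\bar q\cdots\bar q}\,x_q^{m-1}\overline{x_q}^{m}
+\!\!\!\sum_{\substack{(i_2\cdots j_m)\neq(p\cdots p)\\ (i_2\cdots j_m)\neq(q\cdots q)}}\!\!\! a_{pi_2\cdots i_m\bar j_1\cdots\bar j_m} x_{i_2}\cdots x_{i_m}\overline{x_{j_1}}\cdots\overline{x_{j_m}}.
\end{align*}
Bounding the residual sum using $|x_k|\leq|x_p|$ yields $r_p^{q}(A)|x_p|^{2m-1}$, so the triangle inequality gives
$\bigl(|\lambda-a_{p\cdots p\bar p\cdots\bar p}|-r_p^{q}(A)\bigr)|x_p|^{2m-1}\leq |a_{pq\cdots q\bar q\cdots\bar q}|\,|x_q|^{2m-1}$. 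Next, writing the eigenvalue equation at $i=q$ and again bounding every $|x_k|$ by $|x_p|$ gives $|\lambda-a_{q\cdots q\bar q\cdots\bar q}|\,|x_q|^{2m-1}\leq r_q(A)|x_p|^{2m-1}$. Multiplying the two estimates and cancelling $|x_p|^{2m-1}|x_q|^{2m-1}$ (when $|x_q|>0$) delivers $\lambda\in K_{p,q}(A)\subseteq K_{llk}(A)$; the degenerate case $|x_q|=0$, in which the first estimate already forces $|\lambda-a_{p\cdots p\bar p\cdots \bar p}|\leq r_p^{q}(A)$, trivially places $\lambda$ in $K_{p,q}(A)$ since the left-hand side of its defining inequality becomes nonpositive. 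This is the step I expect to be trickiest to state cleanly, because one must justify isolating exactly the $(q\cdots q\bar q\cdots\bar q)$ term, check that the cancellation is legitimate, and handle the $|x_q|=0$ degeneration.

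For the second inclusion I argue by contradiction. Suppose $\lambda\notin K_{ger}(A)$, so $|\lambda-a_{k\cdots k\bar k\cdots\bar k}|>r_k(A)$ for every $k$. Fix any $i\neq j$. Using $r_i(A)\geq r_i^{j}(A)$ I get
\begin{equation*}
|\lambda-a_{i\cdots i\bar i\cdots\bar i}|-r_i^{j}(A) > r_i(A)-r_i^{j}(A)=|a_{ij\cdots j\bar j\cdots\bar j}|\geq 0,
\end{equation*}
and at the same time $|\lambda-a_{j\cdots j\bar j\cdots\bar j}|>r_j(A)\geq 0$. Multiplying these two strict inequalities between nonnegative quantities yields $\bigl(|\lambda-a_{i\cdots i\bar i\cdots\bar i}|-r_i^{j}(A)\bigr)|\lambda-a_{j\cdots j\bar j\cdots\bar j}|>|a_{ij\cdots j\bar j\cdots\bar j}|r_j(A)$, so $\lambda\notin K_{i,j}(A)$. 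Since $i,j$ were arbitrary, $\lambda\notin K_{llk}(A)$, proving $K_{llk}(A)\subseteq K_{ger}(A)$ by contrapositive and completing the proof.
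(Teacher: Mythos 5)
Your proof is correct and follows essentially the same approach as the paper's for $\sigma_h(A)\subseteq K_{llk}(A)$: you split off the term indexed by $(q\cdots q\bar q\cdots\bar q)$ in the eigenvalue equation at the maximal index, bound the remaining sum by $r_p^q(A)|x_p|^{2m-1}$, and multiply the resulting estimate against the Ger\u{s}gorin estimate at the second-largest index. You are somewhat more careful than the paper in treating the degenerate case $|x_q|=0$ explicitly, and you also fill in the contrapositive argument for $K_{llk}(A)\subseteq K_{ger}(A)$, which the paper simply defers to the real-tensor literature.
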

\begin{proof}
We show that $\sigma_h(A)\subseteq K_{llk}(A)$. Let $\lambda$ be an $\hat{H}$-eigenvalues of $A$ with an $\hat{H}$-eigenvector $\mathbf{x}$. Assume that $|x_i|$ and $|x_j|$ are the largest and the second largest of $|x_1|,\cdots,|x_n|$, namely, $|x_i|=\max \{|x_1|,\cdots,|x_n|\}$ and $|x_j|=\max \{|x_1|,\cdots,|x_{i-1}|,|x_{i+1}|,\cdots,|x_n|\}$. It is possible that $|x_i|= |x_j|$. Then 
\begin{align*}
(\lambda-a_{i\cdots i\bar{i}\cdots\bar{i}})\bar{x}_i|x_i|^{2m-2}&=\sum_{(i_2\cdots i_mj_1\cdots j_{m})\neq (i\cdots i)} a_{ii_2\cdots i_m\bar{j}_1\cdots \bar{j}_m} x_{i_2}\cdots x_{i_m}\overline{x_{j_1}}\cdots\overline{x_{j_m}}\\
&=\sum_{(i_2\cdots i_mj_1\cdots j_{m})\neq (i\cdots i), \neq (j\cdots j)} a_{ii_2\cdots i_m\bar{j}_1\cdots \bar{j}_m} x_{i_2}\cdots x_{i_m}\overline{x_{j_1}}\cdots\overline{x_{j_m}}\\
& \ \ \ \  +a_{ij\cdots j\bar{j}\cdots\bar{j}} \bar{x}_j|x_j|^{2m-2}.\end{align*}
So $|\lambda-a_{i\cdots i\bar{i}\cdots\bar{i}}||x_i|^{2m-1}\leq r_{i}^{j}(A)|x_i|^{2m-1}+a_{ij\cdots j\bar{j}\cdots\bar{j}}|x_j|^{2m-1}$, from which we get 
\begin{align} (|\lambda -a_{i\cdots i\bar{i}\cdots\bar{i}}|-r_{i}^{j}(A))||x_i|^{2m-1}\leq |a_{ij\cdots j\bar{j}\cdots\bar{j}}| |x_j|^{2m-1}. \label{4.1}
\end{align}
On the other side, from the proof of Proposition \ref{4.01}, we have \begin{align} \label{4.2}
|\lambda-a_{j\cdots j\bar{j}\cdots\bar{j}}||x_j|^{2m-1}\leq r_{j}(A)|x_i|^{2m-1}.\end{align}
Multiplying (\ref{4.1}) and (\ref{4.2}) we get $(|\lambda -a_{i...i}|-r_{i}^{j}(A))|\lambda-a_{j...j}|\leq|a_{ij...j}|r_{j}(A)$. So $\lambda\in K_{i,j}(A)$. By our assumption, we get $\sigma_h(A)\subseteq K_{llk}(A)$.

The proof of $K_{llk}(A)\subseteq K_{ger}(A)$ follows similarly with the real tensor case which we omit (see \cite{LLK} or \cite{Q1}).
\end{proof}
We can also generalize the following Li-Li type inclusion set defined in \cite{LL} from real tensors to complex tensors.
Assume that $A=(a_{i_1\cdots i_{m}\bar{j}_1\cdots \bar{j}_m})$ is a complex $2m$-th order $n$-dimensional tensor.
	For $1\leq i\leq n$, denote
	$$\bigtriangleup_{i}=\left\{(i_{2}\cdots i_{m}j_1\cdots j_m): (i_{2}\cdots i_{m}j_1\cdots j_m)\neq (i,...,i)\right\},$$
	$$\bigtriangleup_{i}'=\left\{(i_{2}\cdots i_{m}j_1\cdots j_m) \in \bigtriangleup_{i} : i_{k}\neq i, 2\leq k\leq m, j_k\neq i, 1\leq k\leq m\right\},$$
and
	$$\hat{\bigtriangleup}_{i}=\bigtriangleup_{i}-\bigtriangleup_{i}'.$$ 
Denote $$r'_{i}(A)=\sum_{(i_{2}\cdots i_{m}j_1\cdots j_m)\in \bigtriangleup_{i}'}|a_{i_1\cdots i_{m}\bar{j}_1\cdots \bar{j}_m}|$$ and $$\hat{r}_{i}(A)=\sum_{(i_{2}\cdots i_{m}j_1\cdots j_m)\in \hat{\bigtriangleup}_{i}}|a_{i_1\cdots i_{m}\bar{j}_1\cdots \bar{j}_m}|.$$ 
	Then $$r_{i}(A)=r_{i}^{j}(A)+|a_{ij\cdots j\bar{j}\cdots\bar{j}}|=r'_{i}(A)+\hat{r_{i}}(A).$$ 
So $$r_{i}^{j}(A)=r'_{i}(A)+\hat{r}_{i}(A)-|a_{ij\cdots j\bar{j}\cdots\bar{j}}|.$$
	\begin{defn} \label{d4.5} Let
$$K_{i,j}^{ll}(A)=\left\{\lambda\in \mathbb C :(|\lambda-a_{i\cdots i\bar{i}\cdots\bar{i}}|-\hat{r}_{i}(A))|\lambda-a_{j\cdots j\bar{j}\cdots\bar{j}}|\leq r'_{i}(A)r_{j}(A)\right\}$$
and $$K_{ll}(A)=\bigcup_{i,j=1,i\neq j}^n K_{i,j}^{ll}(A).$$
\end{defn}
We have
\begin{prop}
	$\sigma_h(A)\subseteq K_{ll}(A)\subseteq K_{llk}(A)\subseteq K_{ger}(A).$ Therefore, $K_{ll}(A)$ is an inclusion set for the $\hat{H}$-eigenvalues.
\end{prop}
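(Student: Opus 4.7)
The plan is to prove the chain $\sigma_h(A)\subseteq K_{ll}(A)\subseteq K_{llk}(A)\subseteq K_{ger}(A)$ by treating the three inclusions in turn. The rightmost inclusion $K_{llk}(A)\subseteq K_{ger}(A)$ was already established in Proposition \ref{4.4}, so only the first two require new work, and both follow the pattern of their real tensor analogues in \cite{LL} combined with the $\hat{H}$-eigenvalue adaptations used in Propositions \ref{4.01} and \ref{4.4}.

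For $\sigma_h(A)\subseteq K_{ll}(A)$, let $\lambda$ be an $\hat{H}$-eigenvalue of $A$ with $\hat{H}$-eigenvector $\mathbf{x}$, and choose indices $i\neq j$ so that $|x_i|=\max_k|x_k|$ and $|x_j|=\max_{k\neq i}|x_k|$. Isolating the diagonal term $a_{i\cdots i\bar{i}\cdots\bar{i}}$ in the defining equation at row $i$ and splitting the remaining sum by the partition $\bigtriangleup_i=\bigtriangleup_i'\sqcup\hat{\bigtriangleup}_i$ gives two blocks: on $\bigtriangleup_i'$ no index equals $i$, so each factor has modulus at most $|x_j|$, producing a bound $r'_i(A)|x_j|^{2m-1}$; on $\hat{\bigtriangleup}_i$ at least one index is $i$ while all other factors are bounded by $|x_i|$, producing a bound $\hat{r}_i(A)|x_i|^{2m-1}$. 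Together this gives
\[
(|\lambda-a_{i\cdots i\bar{i}\cdots\bar{i}}|-\hat{r}_i(A))\,|x_i|^{2m-1}\leq r'_i(A)\,|x_j|^{2m-1}.
\]
Multiplying with the Ger\u{s}gorin-type estimate at row $j$, namely $|\lambda-a_{j\cdots j\bar{j}\cdots\bar{j}}||x_j|^{2m-1}\leq r_j(A)|x_i|^{2m-1}$ (derived exactly as inequality (\ref{4.2})), the common factor $|x_i|^{2m-1}|x_j|^{2m-1}$ cancels when $|x_j|>0$, placing $\lambda\in K_{i,j}^{ll}(A)$. The degenerate case $|x_j|=0$ forces $\mathbf{x}$ to have only its $i$-th coordinate nonzero and $\lambda=a_{i\cdots i\bar{i}\cdots\bar{i}}$, in which situation the left-hand side of the defining inequality of $K_{i,j}^{ll}(A)$ is non-positive while the right-hand side is non-negative, so $\lambda\in K_{i,j}^{ll}(A)$ anyway.

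For $K_{ll}(A)\subseteq K_{llk}(A)$, I would compare the two families of defining inequalities using the identity $r_i^j(A)=r'_i(A)+\hat{r}_i(A)-|a_{ij\cdots j\bar{j}\cdots\bar{j}}|$ recorded in the excerpt. For $\lambda\in K_{i,j}^{ll}(A)$, a direct algebraic subtraction shows that the LL inequality implies the LLK inequality $(|\lambda-a_{i\cdots i\bar{i}\cdots\bar{i}}|-r_i^j(A))|\lambda-a_{j\cdots j\bar{j}\cdots\bar{j}}|\leq|a_{ij\cdots j\bar{j}\cdots\bar{j}}|r_j(A)$ in the regime $|\lambda-a_{j\cdots j\bar{j}\cdots\bar{j}}|\geq r_j(A)$, since the required excess $(r'_i(A)-|a_{ij\cdots j\bar{j}\cdots\bar{j}}|)(|\lambda-a_{j\cdots j\bar{j}\cdots\bar{j}}|-r_j(A))$ is then non-negative. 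In the complementary regime $|\lambda-a_{j\cdots j\bar{j}\cdots\bar{j}}|<r_j(A)$, an index-switching argument in the spirit of \cite{LL} locates a suitable $k\neq j$ so that $\lambda\in K_{j,k}(A)$. Either way $\lambda\in K_{llk}(A)$.

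The main obstacle I expect is the complementary regime of the second inclusion. The first inclusion is a clean refinement of the splitting used in Proposition \ref{4.4}, but the $K_{ll}\subseteq K_{llk}$ step is transparent only when $|\lambda-a_{j\cdots j\bar{j}\cdots\bar{j}}|\geq r_j(A)$; the opposite case requires a careful reselection of the index pair as in \cite{LL}, and verifying that this reselection carries over verbatim to the complex setting (where $r'_i(A)$, $\hat{r}_i(A)$ involve absolute values of complex entries but no additional conjugation subtleties) is the technical heart of the proposition.
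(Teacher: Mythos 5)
Your proposal is correct and follows essentially the same route as the paper: the inclusion $\sigma_h(A)\subseteq K_{ll}(A)$ is proved by isolating the diagonal term, splitting over $\bigtriangleup_i'$ and $\hat{\bigtriangleup}_i$, and multiplying with the Ger\u{s}gorin estimate at the second-largest index, while the purely set-theoretic inclusions $K_{ll}(A)\subseteq K_{llk}(A)\subseteq K_{ger}(A)$ are, as in the paper, deferred to \cite{LL} and \cite{LLK}. The only small deviation is in the degenerate case $|x_j|=0$: the paper simply reads off $|\lambda-a_{i\cdots i\bar i\cdots\bar i}|-\hat r_i(A)\le 0$ directly from the inequality $(|\lambda-a_{i\cdots i\bar i\cdots\bar i}|-\hat r_i(A))|x_i|^{2m-1}\le r'_i(A)|x_j|^{2m-1}=0$, whereas you compute $\lambda=a_{i\cdots i\bar i\cdots\bar i}$ explicitly; both are valid, and the extra detail you supply on the favorable regime of $K_{ll}\subseteq K_{llk}$ (the sign of $(r'_i(A)-|a_{ij\cdots j\bar j\cdots\bar j}|)(|\lambda-a_{j\cdots j\bar j\cdots\bar j}|-r_j(A))$) is a correct observation that the paper leaves implicit.
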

\begin{proof}
We show that $\sigma_h(A)\subseteq K_{ll}(A)$.
Let $\lambda$ be an $\hat{H}$-eigenvalues of $A$ with an $\hat{H}$-eigenvector $\mathbf{x}$. 
Let $|x_{t}|$and $|x_{s}|$be the largest and the second largest of $|x_{1}|,...|x_{n}|.$ Then$|x_{t}|>0.$ By the definition of $\hat{H}$-eigenvalues, we have 
\begin{align*}
(\lambda-a_{t\cdots t\bar{t}\cdots\bar{t}})\bar{x}_t|x_t|^{2m-2}=\sum_{(i_2\cdots i_mj_1\cdots j_{m})\in \hat{\bigtriangleup}_{t}} a_{ti_2\cdots i_m\bar{j}_1\cdots \bar{j}_m} x_{i_2}\cdots x_{i_m}\tilde{x_{j_1}}\cdots\overline{x_{j_m}}\\
+\sum_{(i_2\cdots i_mj_1\cdots j_{m})\in \bigtriangleup'_{t}}a_{ti_2\cdots i_mj_1\cdots j_m}  x_{i_2}\cdots x_{i_m}\overline{x_{j_1}}\cdots\overline{x_{j_m}}.\end{align*}
This implies that
\begin{align*}
|\lambda-a_{t\cdots t\bar{t}\cdots\bar{t}}||x_{t}|^{2m-1}&\leq\sum_{(i_2\cdots i_mj_1\cdots j_{m})\in \hat{\bigtriangleup}_{t}} |a_{ti_2\cdots i_m\bar{j}_1\cdots \bar{j}_m}||x_{t}|^{2m-1}\\
&+\sum_{(i_2\cdots i_mj_1\cdots j_{m})\in\bigtriangleup'_{t}}|a_{ti_2\cdots i_m\bar{j}_1\cdots \bar{j}_m}||x_{s}|^{2m-1}\\
&=\hat{r}_{t}(A)|x_{t}|^{2m-1}+r'_{t}(A)|x_{s}|^{2m-1}.
\end{align*}
Thus, we have \begin{align}(|\lambda-a_{t\cdots t\bar{t}\cdots\bar{t}}|-\hat{r}_{t}(A))|x_{t}|^{2m-1}\leq r'_{t}(A)|x_{s}|^{2m-1}.\label{4.3}\end{align}
Assume that $|x_{s}|>0.$ From the proof of Proposition \ref{4.01}, we have 
\begin{align} |\lambda-a_{s\cdots s\bar{s}\cdots\bar{s}}||x_{s}|^{2m-1}\leq r_{s}(A)|x_{t}|^{2m-1}.\label{4.4}\end{align}
Since $|x_{t}x_{s}|>0$, multiplying (\ref{4.3}) and (\ref{4.4}),we have
$(|\lambda-a_{t...t}|-\hat{r}_{t}(A))|\lambda-a_{s...s}|\leq r'_{t}r_{s}(A).$
So $\lambda\in K_{t,s}^{ll}(A).$ If $x_{s}=0$, then$|\lambda-a_{t\cdots t\bar{t}\cdots\bar{t}}|-\hat{r}_{t}(A)\leq 0 $ since $|x_{t}|>0.$ This also implies that $\lambda\in K_{t,s}^{ll}(A)\subseteq K_{ll}(A).$ Therefore, $K_{ll}(A)$ is an inclusion set for the $\hat{H}$-eigenvalues.

The proof of $K_{ll}(A)\subseteq K_{llk}(A)$ is the same with the real tensor case which we omit (see \cite{LL}).\end{proof}
The following example of Hermitian matrix shows the above inclusion is strict.
\begin{exam}
Let $A=\begin{pmatrix} 1&\sqrt{-1}&0\\-\sqrt{-1}&2&1\\0&1&3\end{pmatrix}$ be a $(3\times 3)$ Hermitian matrix. Then we have $r_1=1, r_2=2, r_3=1$, and $r^2_1=0, r^3_1=1, r_2^1=1, r_2^3=1,r_3^1=1, r_3^2=0$. Also, $r_1'=1, r_2'=2, r_3'=1, \hat{r}_1=\hat{r}_2=\hat{r}_3=0$.
So we get the following description:
$$ \sigma_h(A)=\{2-\sqrt{3},2,2+\sqrt{3}\}, \ \ \  K_{ger}(A)=\{\lambda\in \mathbb C: |\lambda-2|\leq 2\}$$
 \begin{align*} K_{llk}(A)&=\{\lambda\in \mathbb C:|(\lambda-1)(\lambda-2)|\leq 2\}\cup \{\lambda\in \mathbb C:(|\lambda-2|-1)|\lambda-1|\leq 1\}\\
& \ \ \ \ \ \ \  \cup \{\lambda\in \mathbb C:(|\lambda-2|-1)|\lambda-3|\leq 1\} \cup \{\lambda\in \mathbb C:|(\lambda-3)(\lambda-2)|\leq 2\} \end{align*}
\begin{align*} K_{ll}(A)=&\{\lambda\in \mathbb C:|(\lambda-1)(\lambda-2)|\leq 2\}\cup \{\lambda\in \mathbb C:|(\lambda-3)(\lambda-2)|\leq 2\}\\
&\cup\{\lambda\in \mathbb C:|(\lambda-1)(\lambda-3)|\leq 1\}
\end{align*}
It follows that $\sigma_h(A)\subsetneq K_{ll}(A)\subsetneq K_{llk}(A)\subsetneq K_{ger}(A)$. For example, $2+2\sqrt{-1}\in  K_{ger}(A)\setminus K_{llk}(A)$ and $2+
\dfrac{3\sqrt{-1}}{2}\in K_{llk}(A)\setminus K_{ll}(A)$.
\end{exam}
We remark that there are other inclusion sets for H-eigenvalues of real tensors (see \cite{BWL}\cite{LCL}\cite{LJL}\cite{LZL}\cite{WZC}). They can be generalized to be inclusion sets for $\hat{H}$-eigenvalues of complex tensors similarly.

In the rest of the section, we will make use of the three $\hat{H}$-eigenvalues inclusion sets to give some checkable criterions for the Hermitian positive semi-definiteness (PSD) of Hermitian tensors. We will use the same notations with real tensors so as to be consistent (see \cite{QL}). 
\begin{defn}
	Let $A$ be a $2m$-th order $n$-dimensional Hermitian tensor. We call that $A$ is diagonally dominated if for any $1\leq i\leq n$, $a_{i\cdots i\bar{i}\cdots\bar{i}}\geq r_i(A).$ We call $A$ is strictly diagonally dominated if for any $1\leq i\leq n$, $a_{i\cdots i\bar{i}\cdots\bar{i}}> r_i(A).$ \end{defn}
	From Proposition 4.2, we have 
	\begin{cor} \label{4.5}
	If the conjugate partial symmetrization $S_A$ of $A$ is (strictly) diagonally dominated, then $A$ is Hermitian (PD) PSD.
	\end{cor}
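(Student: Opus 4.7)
The plan is to chain together three results already established: Proposition \ref{3.1} (Hermitian PSD/PD is equivalent to nonnegativity/positivity of the $\hat{H}$-eigenvalues of $S_A$), Lemma 3.2 (the $\hat{H}$-eigenvalues of a Hermitian tensor are real), and Proposition \ref{4.01} (the Ger\u{s}gorin inclusion set controls the $\hat{H}$-eigenvalues).

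First I would reduce the claim to a statement about $\sigma_h(S_A)$. By Proposition \ref{3.1}, $A$ is Hermitian PSD (resp.\ PD) iff every element of $\sigma_h(S_A)$ is nonnegative (resp.\ positive). So it suffices to show this positivity. Since $S_A$ is CPS, hence Hermitian, Lemma 3.2 guarantees that every $\lambda\in\sigma_h(S_A)$ is real, and also the diagonal entries $b_{i\cdots i\bar{i}\cdots\bar{i}}$ are real (they satisfy $\overline{b_{i\cdots i\bar{i}\cdots\bar{i}}}=b_{i\cdots i\bar{i}\cdots\bar{i}}$ from the Hermitian symmetry). Note moreover that the symmetrization formula gives $b_{i\cdots i\bar{i}\cdots\bar{i}}=a_{i\cdots i\bar{i}\cdots\bar{i}}$, so the diagonal entries of $A$ and $S_A$ agree, although I will apply diagonal dominance to $S_A$ directly as stated.

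Next I would apply Proposition \ref{4.01} to $S_A$: any $\lambda\in\sigma_h(S_A)$ lies in $K_{ger}(S_A)$, so there exists an index $k$ with
\[
|\lambda-b_{k\cdots k\bar{k}\cdots\bar{k}}|\leq r_k(S_A).
\]
Since $\lambda$ and $b_{k\cdots k\bar{k}\cdots\bar{k}}$ are real, this yields
\[
\lambda\geq b_{k\cdots k\bar{k}\cdots\bar{k}}-r_k(S_A).
\]
Using the hypothesis that $S_A$ is diagonally dominated, namely $b_{k\cdots k\bar{k}\cdots\bar{k}}\geq r_k(S_A)$ for every $k$, we obtain $\lambda\geq 0$. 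If the dominance is strict, the inequality $b_{k\cdots k\bar{k}\cdots\bar{k}}>r_k(S_A)$ gives $\lambda>0$. Combining with the reduction in the first step concludes that $A$ is Hermitian PSD (resp.\ PD).

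There is essentially no obstacle here; the statement is an immediate synthesis of the earlier results, and the only small subtlety is to notice that one must use the reality of $\hat{H}$-eigenvalues of a Hermitian tensor (Lemma 3.2) in order to convert the modulus inequality from the Ger\u{s}gorin set into a one-sided real inequality.
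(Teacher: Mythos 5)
Your proof is correct and follows essentially the same route as the paper's: reduce via Proposition~\ref{3.1} to the sign of the $\hat{H}$-eigenvalues of $S_A$, apply the Ger\u{s}gorin inclusion set $K_{ger}(S_A)$, and invoke diagonal dominance. Your write-up is in fact slightly more careful than the paper's in that it explicitly invokes Lemma~3.2 to justify the reality of $\lambda$ and of the diagonal entries, which is needed to pass from $|\lambda-b_{k\cdots k\bar{k}\cdots\bar{k}}|\leq r_k(S_A)$ to the one-sided estimate $\lambda\geq b_{k\cdots k\bar{k}\cdots\bar{k}}-r_k(S_A)$ — a step the paper takes implicitly.
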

	\begin{proof}
	Since $A$ is Hermitian (PD) PSD if and only if $S_A$ is Hermitian (PD) PSD. For simplicity, we assume that $A=S_A$. Let $\lambda$ be a $\hat{H}$-eigenvalue of $S_A$. Then $\lambda\in K_{ger}(S_A)$. So for some $i$, $|\lambda-a_{i\cdots i\bar{i}\cdots\bar{i}}|\leq r_i(S_A)$. Then $\lambda\geq a_{i\cdots i\bar{i}\cdots\bar{i}}-r_i(S_A)\geq 0$. Therefore, $S_A$ is Hermitian PSD by Proposition \ref{3.1} which also gives the Hermitian PSD of $A$ as $f(A)=f(S_A)$. The case of strictly diagonally dominated tensors follows similarly. 
	\end{proof}
	Next, we define the concepts of LLK tensor. 
\begin{defn}
	Let $A$ be a $2m$-th order $n$-dimensional Hermitian tensor. We call that $A$ an LLK tensor if all the diagonal entries $a_{i\cdots i\bar{i}\cdots\bar{i}}\geq0$, and for $1\leq i,j\leq n, i\neq j,$
	$$(a_{i\cdots i\bar{i}\cdots\bar{i}}-r_{i}^{j}(A))a_{j\cdots j}\geq r_{j}(A)|a_{ij\cdots j\bar{j}\cdots \bar{j}}|.$$
If furthermore, $a_{i\cdots i\bar{i}\cdots\bar{i}}>0$ for any $1\leq i\leq n$ and strict inequality holds in the above inequality, then A is called a strict LLK tensor.
\end{defn}
We have 
	\begin{cor}\label{4.6}
	If the conjugate partial symmetrization $S_A$ of $A$ is a (strictly) LLK tensor, then $A$ is Hermitian (PD) PSD.
	\end{cor}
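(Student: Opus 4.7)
The plan is to imitate the strategy of Corollary \ref{4.5}, replacing the use of the Ger\u{s}gorin inclusion set $K_{ger}$ by the tighter Li--Li--Kong set $K_{llk}$. First, since $f(A)=f(S_A)$, Hermitian (semi)definiteness of $A$ is equivalent to that of $S_A$, so it suffices to treat the case $A=S_A$, i.e., a CPS tensor that is itself LLK. By Proposition \ref{3.1} the tensor $A$ is Hermitian PSD (resp.\ PD) iff every $\hat H$-eigenvalue of $A$ is $\geq 0$ (resp.\ $>0$); since $A$ is Hermitian, any such $\hat H$-eigenvalue $\lambda$ is real by the lemma preceding Proposition \ref{3.1}.

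To prove $\lambda\geq 0$ in the LLK case, I would argue by contradiction. Given $\lambda\in\sigma_h(A)$, Proposition \ref{4.4} produces indices $i\neq j$ with
\[
(|\lambda-a_{i\cdots i\bar i\cdots\bar i}|-r_i^j(A))\,|\lambda-a_{j\cdots j\bar j\cdots\bar j}|\;\leq\;|a_{ij\cdots j\bar j\cdots\bar j}|\,r_j(A).
\]
Assume $\lambda<0$. Since $a_{i\cdots i\bar i\cdots\bar i},a_{j\cdots j\bar j\cdots\bar j}\geq 0$, the absolute values on the left collapse to $a_{i\cdots i\bar i\cdots\bar i}-\lambda$ and $a_{j\cdots j\bar j\cdots\bar j}-\lambda$. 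Combining with the LLK hypothesis $(a_{i\cdots i\bar i\cdots\bar i}-r_i^j(A))\,a_{j\cdots j\bar j\cdots\bar j}\geq r_j(A)|a_{ij\cdots j\bar j\cdots\bar j}|$ and expanding the product yields
\[
\lambda\bigl[\lambda-(a_{i\cdots i\bar i\cdots\bar i}-r_i^j(A)+a_{j\cdots j\bar j\cdots\bar j})\bigr]\;\leq\;0.
\]
The LLK inequality forces $a_{i\cdots i\bar i\cdots\bar i}-r_i^j(A)+a_{j\cdots j\bar j\cdots\bar j}\geq 0$ whenever $a_{j\cdots j\bar j\cdots\bar j}>0$, so the quadratic in $\lambda$ pins $\lambda$ into the closed interval between its two roots $0$ and $a_{i\cdots i\bar i\cdots\bar i}-r_i^j(A)+a_{j\cdots j\bar j\cdots\bar j}$, contradicting $\lambda<0$. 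The degenerate subcase $a_{j\cdots j\bar j\cdots\bar j}=0$ forces $r_j(A)|a_{ij\cdots j\bar j\cdots\bar j}|=0$ from the LLK inequality, and a short separate argument (using Proposition \ref{4.01} applied at index $j$ together with the $\hat H$-eigenvalue equation at $j$) handles it: either the corresponding eigenvector component $x_j$ must vanish (giving $\lambda=0$ directly via row $j$) or the inclusion degenerates and forces $\lambda\geq 0$ anyway. For the strict LLK case, the same expansion with strict inequalities and $a_{i\cdots i\bar i\cdots\bar i}>0$ upgrades $\lambda\geq 0$ to $\lambda>0$, since $\lambda=0$ would turn the inequality $(a_{i\cdots i\bar i\cdots\bar i}-r_i^j(A))a_{j\cdots j\bar j\cdots\bar j}\leq r_j(A)|a_{ij\cdots j\bar j\cdots\bar j}|$ against the strict LLK hypothesis.

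The main obstacle I anticipate is the bookkeeping around the boundary case where some diagonal entry $a_{j\cdots j\bar j\cdots \bar j}$ equals zero: in that situation the quadratic inequality above gives no information on its own, and one must return to the defining $\hat H$-eigenvalue equation at the index $j$ to rule out $\lambda<0$. Once this is dispatched, the passage from PSD to PD via strictness is routine, and Proposition \ref{3.1} converts the conclusion about $\hat H$-eigenvalues of $S_A$ back into Hermitian (PD) PSD of $A$.
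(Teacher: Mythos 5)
Your proof follows the paper's own strategy almost step for step: reduce to the CPS case $A=S_A$ via $f(A)=f(S_A)$, invoke Proposition \ref{3.1} to pass to the sign of $\hat H$-eigenvalues, assume $\lambda<0$, place $\lambda$ in some $K_{i,j}(A)$ via Proposition \ref{4.4}, and play the $K_{i,j}$ inequality off against the LLK hypothesis. The paper compresses the final comparison into the one-line chain
\[
\bigl(|\lambda-a_{i\cdots i\bar i\cdots\bar i}|-r_i^j(S_A)\bigr)\,|\lambda-a_{j\cdots j\bar j\cdots\bar j}|\;>\;\bigl(a_{i\cdots i\bar i\cdots\bar i}-r_i^j(S_A)\bigr)\,a_{j\cdots j\bar j\cdots\bar j}\;\geq\; r_j(S_A)\,|a_{ij\cdots j\bar j\cdots\bar j}|,
\]
obtained by multiplying $P>P'$ and $Q>Q'\ge 0$ termwise. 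You instead expand the product and reduce to a sign check on $\lambda\bigl[\lambda-(a_{i\cdots i\bar i\cdots\bar i}-r_i^j+a_{j\cdots j\bar j\cdots\bar j})\bigr]$. These are the same computation written differently.

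Where your write-up adds value is that it makes explicit the hidden hypothesis in that multiplication: it requires $a_{i\cdots i\bar i\cdots\bar i}-r_i^j\ge 0$, which the LLK inequality only guarantees when $a_{j\cdots j\bar j\cdots\bar j}>0$. When $a_{j\cdots j\bar j\cdots\bar j}=0$ the paper's displayed strict inequality is not justified, and your quadratic reduction likewise becomes vacuous. You correctly observe that LLK then forces $r_j|a_{ij\cdots j\bar j\cdots\bar j}|=0$, but your ``short separate argument'' for the degenerate case is only sketched, and one of its two alternatives is stated loosely: if $x_j=0$ for the second-largest component, it is the eigenvalue equation at row $i$ (not row $j$) that gives $\lambda=a_{i\cdots i\bar i\cdots\bar i}\ge 0$, and the alternative ``the inclusion degenerates and forces $\lambda\ge 0$ anyway'' is not an argument. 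In the subcase $r_j>0$ and $a_{ij\cdots j\bar j\cdots\bar j}=0$ one really must bring in the LLK inequality at the reversed pair $(j,i)$ (which forces $r_j^i\,a_{i\cdots i\bar i\cdots\bar i}=0$ and $r_i|a_{ji\cdots i\bar i\cdots\bar i}|=0$) and then split further on whether $a_{i\cdots i\bar i\cdots\bar i}$ vanishes; this case analysis is genuinely nontrivial and is not resolved by your sketch. So: your route is the same as the paper's, you are more alert than the paper to the boundary case $a_{j\cdots j\bar j\cdots\bar j}=0$, but that case is not actually closed in your proposal any more than it is in the paper.
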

	\begin{proof}
	As before, we assume that $A=S_A$ and prove the case of Hermitian PSD. By Proposition \ref{3.1}, it suffices to show that its $\hat{H}$-eigenvalues are nonnegative. Assuming by contradiction, there is a $\hat{H}$-eigenvalue $\lambda$ of $S_A$ with $\lambda< 0$. Then by Proposition \ref{4.4}, $\lambda\in K_{i,j}(S_A)$ for some $i\neq j$. Therefore, $(|\lambda -a_{i\cdots i\bar{i}\cdots\bar{i}}|-r_{i}^{j}(S_A))|\lambda-a_{j\cdots j\bar{j}\cdots\bar{j}}|\leq |a_{ij\cdots j\bar{j}\cdots \bar{j}}|r_{j}(S_A)$ holds. On the other side, since $\lambda< 0$, we have $|\lambda -a_{i\cdots i\bar{i}\cdots\bar{i}}|-r_{i}^{j}(S_A)>a_{i\cdots i\bar{i}\cdots\bar{i}}-r_{i}^{j}(S_A)$ and $|\lambda-a_{j\cdots j\bar{j}\cdots\bar{j}}|> a_{j\cdots j\bar{j}\cdots\bar{j}}\geq 0$. So $$(|\lambda -a_{i\cdots i\bar{i}\cdots\bar{i}}|-r_{i}^{j}(S_A))|\lambda-a_{j\cdots j\bar{j}\cdots\bar{j}}|> (a_{i\cdots i\bar{i}\cdots\bar{i}}-r_{i}^{j}(S_A))a_{j\cdots j\bar{j}\cdots\bar{j}}\geq r_{j}(S_A)|a_{ij\cdots j\bar{j}\cdots \bar{j}}|$$ where the last inequality follows from the definition. This gives a contradiction. So $A$ is Hermitian PSD.
	\end{proof}
	Finally, we define the following kind of tensor.
\begin{defn}
$A$ is called an LL tensor if al the diagonal entries $a_{i\cdots i\bar{i}\cdots\bar{i}}\geq0,$ and for $1\leq i,j\leq n, i\neq j,$
	\begin{align} (a_{i\cdots i\bar{i}\cdots\bar{i}}-\hat{r}_{i}(A))a_{j\cdots j\bar{j}\cdots\bar{j}}\geq r'_{i}(A)r_{j}(A).\label{4.13} \end{align}
	If furthermore, $a_{i\cdots i\bar{i}\cdots\bar{i}}>0$ for $1\leq i\leq n$ and strict inequality holds in the above inequality, then A is called a strict LL tensor.
\end{defn}
With the same proof as in Corollary \ref{4.6}, the following still holds. 
	\begin{cor}
	If the conjugate partial symmetrization $S_A$ of $A$ is an (strict) LL tensor, then $A$ is Hermitian (PD) PSD.
	\end{cor}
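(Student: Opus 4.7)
The plan is to mimic exactly the argument of Corollary \ref{4.6}, simply replacing the Li-Li-Kong type inclusion set by the Li-Li type inclusion set and the LLK defining inequality by the LL defining inequality (\ref{4.13}). As in that corollary, I would first reduce to the case $A=S_A$, using the identity $f(A)=f(S_{A})$ that makes Hermitian positive (semi)definiteness of $A$ equivalent to that of $S_{A}$. Then by Proposition \ref{3.1} it suffices to prove that every $\hat{H}$-eigenvalue of $S_{A}$ is nonnegative (resp.\ positive in the strict case).

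The core step is a contradiction argument. Assume $\lambda$ is an $\hat{H}$-eigenvalue of $S_A$ with $\lambda<0$. By Proposition 4.5 (the $K_{ll}$ inclusion), there exist $i\neq j$ with $\lambda\in K_{i,j}^{ll}(S_A)$, i.e.
\[
(|\lambda-a_{i\cdots i\bar{i}\cdots\bar{i}}|-\hat{r}_i(S_A))\,|\lambda-a_{j\cdots j\bar{j}\cdots\bar{j}}|\le r'_i(S_A)\,r_j(S_A).
\]
Since all diagonal entries are nonnegative and $\lambda<0$, I have $|\lambda-a_{i\cdots i\bar{i}\cdots\bar{i}}|>a_{i\cdots i\bar{i}\cdots\bar{i}}$ and $|\lambda-a_{j\cdots j\bar{j}\cdots\bar{j}}|>a_{j\cdots j\bar{j}\cdots\bar{j}}\ge0$. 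Combining with the LL defining inequality (\ref{4.13}), this yields
\[
(|\lambda-a_{i\cdots i\bar{i}\cdots\bar{i}}|-\hat{r}_i(S_A))|\lambda-a_{j\cdots j\bar{j}\cdots\bar{j}}|>(a_{i\cdots i\bar{i}\cdots\bar{i}}-\hat{r}_i(S_A))a_{j\cdots j\bar{j}\cdots\bar{j}}\ge r'_i(S_A)r_j(S_A),
\]
contradicting the inclusion. Hence every $\hat{H}$-eigenvalue is nonnegative and $A$ is Hermitian PSD. The strict case is identical: from $\lambda\le 0$ (with the strict LL inequality) one again produces a strict inequality against the $K_{ll}$ bound.

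The one subtle point, which is the only nontrivial verification, is to justify that the intermediate quantity $a_{i\cdots i\bar{i}\cdots\bar{i}}-\hat{r}_i(S_A)$ is nonnegative so that the chain of inequalities goes through in the correct direction. In the non-strict case this follows from (\ref{4.13}) whenever $a_{j\cdots j\bar{j}\cdots\bar{j}}>0$; the remaining degenerate case $a_{j\cdots j\bar{j}\cdots\bar{j}}=0$ forces $|\lambda-a_{j\cdots j\bar{j}\cdots\bar{j}}|=|\lambda|>0$ under $\lambda<0$, and then (\ref{4.13}) yields $r'_i(S_A)r_j(S_A)=0$, from which the same contradiction can be extracted directly. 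I expect this small case analysis to be the only minor obstacle beyond the pattern already established in Corollary \ref{4.6}.
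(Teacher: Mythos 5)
Your proposal mirrors the paper's own treatment: the paper simply states that the result holds ``with the same proof as in Corollary~\ref{4.6},'' and you carry out precisely that substitution (replace $K_{llk}$ by $K_{ll}$, replace $r_i^j$ by $\hat r_i$ and $|a_{ij\cdots j\bar j\cdots\bar j}|$ by $r_i'$). The reduction to $A=S_A$, the appeal to Proposition~\ref{3.1}, and the contradiction via the inclusion set are all as in the paper.

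You are right to worry about the sign of $a_{i\cdots i\bar i\cdots\bar i}-\hat r_i(S_A)$: the inequality $u\cdot v > u'\cdot v'$ from $u>u'$ and $v>v'\ge 0$ is only automatic when $u'\ge 0$, and this implicit step is present (unremarked) in the paper's proof of Corollary~\ref{4.6} as well. However, your proposed repair of the degenerate case $a_{j\cdots j\bar j\cdots\bar j}=0$ does not close the gap. In that case $(\ref{4.13})$ gives $r_i'(S_A)r_j(S_A)=0$, and the inclusion $\lambda\in K^{ll}_{i,j}(S_A)$ with $|\lambda-a_{j\cdots j\bar j\cdots\bar j}|=|\lambda|>0$ forces $|\lambda-a_{i\cdots i\bar i\cdots\bar i}|\le \hat r_i(S_A)$; this is merely $\hat r_i(S_A)\ge a_{i\cdots i\bar i\cdots\bar i}+|\lambda|$, which the LL condition does not by itself contradict, so ``the same contradiction'' is not ``extracted directly.'' A cleaner route, which avoids the sign issue entirely, is to expand the two bounds algebraically: writing $|\lambda-a_{i\cdots i\bar i\cdots\bar i}|=a_{i\cdots i\bar i\cdots\bar i}+|\lambda|$ and similarly for $j$ (valid since $\lambda<0$ and the diagonals are nonnegative), the combination of $\lambda\in K^{ll}_{i,j}(S_A)$ with $(\ref{4.13})$ reduces to $|\lambda|\bigl(a_{i\cdots i\bar i\cdots\bar i}-\hat r_i(S_A)+a_{j\cdots j\bar j\cdots\bar j}+|\lambda|\bigr)\le 0$, so the real point to justify is $a_{i\cdots i\bar i\cdots\bar i}+a_{j\cdots j\bar j\cdots\bar j}\ge \hat r_i(S_A)$, and the degenerate subcase still needs a separate argument (e.g.\ showing that $r_j(S_A)=0$ forces the corresponding eigenvector component to vanish and then invoking the $|x_s|=0$ branch of Proposition~4.6's proof). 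So the overall strategy is the paper's, but the case analysis you flagged is not yet finished and needs the extra step just described.
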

	
The relations between the three Hermitian PSD tensors are given by the following.
\begin{prop}
Let $A$ be a $2m$-th order $n$-dimensional Hermitian tensor. If A is (strictly) diagonally dominated, then A is an (strict) LLK tensor. If A is an (strict) LLK tensor, then A is an (strict) LL tensor.
	\end{prop}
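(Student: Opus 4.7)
The plan is to unwind the definitions directly for the first implication, and for the second to reduce LL at each pair $(i,j)$ to a short case analysis, controlled by whether $a_{j\cdots j\bar{j}\cdots\bar{j}}$ meets $r_j(A)$, using LLK at both $(i,j)$ and at the reversed pair $(j,i)$.

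For the first implication, fix $i \neq j$. Diagonal dominance gives
\[
a_{i\cdots i\bar{i}\cdots \bar{i}} - r_i^j(A) = a_{i\cdots i\bar{i}\cdots \bar{i}} - r_i(A) + |a_{ij\cdots j\bar{j}\cdots\bar{j}}| \geq |a_{ij\cdots j\bar{j}\cdots\bar{j}}|
\]
and $a_{j\cdots j\bar{j}\cdots\bar{j}} \geq r_j(A) \geq 0$; multiplying these two nonnegative inequalities produces exactly the LLK inequality, while the diagonal nonnegativity required for an LLK tensor is immediate from $a_{i\cdots i\bar{i}\cdots\bar{i}} \geq r_i(A) \geq 0$. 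The strict case is identical with every $\geq$ replaced by $>$.

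For the second implication, abbreviate $\alpha = a_{i\cdots i\bar{i}\cdots \bar{i}}$, $\beta = a_{j\cdots j\bar{j}\cdots\bar{j}}$, $\gamma = |a_{ij\cdots j\bar{j}\cdots\bar{j}}|$, and $\gamma^* = |a_{ji\cdots i\bar{i}\cdots\bar{i}}|$. Using $r_i^j(A) = r'_i(A) + \hat{r}_i(A) - \gamma$, a short rearrangement yields the identity
\[
(\alpha - \hat{r}_i(A))\beta - r'_i(A)\, r_j(A) = [(\alpha - r_i^j(A))\beta - \gamma\, r_j(A)] + (r'_i(A) - \gamma)(\beta - r_j(A)),
\]
in which the first bracket is $\geq 0$ by LLK at $(i,j)$ and $r'_i(A) - \gamma \geq 0$ because $|a_{ij\cdots j\bar{j}\cdots\bar{j}}|$ is one summand of $r'_i(A)$. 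If $\beta \geq r_j(A)$, both terms on the right are nonnegative and LL at $(i,j)$ is immediate.

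If $\beta < r_j(A)$, I would invoke LLK at the reversed pair $(j,i)$ and rewrite it as $(\beta - r_j(A))\alpha \geq \gamma^*(r_i(A) - \alpha)$. Since the left-hand side is negative (the degenerate case $\alpha = 0$ is handled separately), this forces $\alpha > r_i(A)$ and yields $\beta \geq r_j(A) - \gamma^*(\alpha - r_i(A))/\alpha$. Substituting this lower bound into $(\alpha - \hat{r}_i(A))\beta$ and cancelling the positive factor $\alpha - r_i(A)$, the LL inequality at $(i,j)$ reduces to $\alpha\, r_j(A) \geq (\alpha - \hat{r}_i(A))\, \gamma^*$, which holds because $\alpha - \hat{r}_i(A) \leq \alpha$ and, crucially, $\gamma^* \leq r_j(A)$ since $|a_{ji\cdots i\bar{i}\cdots\bar{i}}|$ is one of the summands defining $r_j(A)$. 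The strict version is obtained by tracking strict inequalities through the same chain; in particular strict LLK forces all $a_{i\cdots i\bar{i}\cdots\bar{i}} > 0$, so the degenerate subcases do not arise. The main technical obstacle is precisely this second case: LLK at $(i,j)$ alone is insufficient, and the argument closes only by combining both $(i,j)$ and $(j,i)$ and by exploiting the two bounds $\gamma \leq r'_i(A)$ and $\gamma^* \leq r_j(A)$.
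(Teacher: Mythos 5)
Your first implication is proved exactly as in the paper (multiply $a_{j\cdots j\bar j\cdots\bar j}\ge r_j(A)$ with $a_{i\cdots i\bar i\cdots\bar i}-r_i^j(A)\ge |a_{ij\cdots j\bar j\cdots\bar j}|$), so there is nothing to discuss there. For the second implication your route is genuinely different: the paper splits globally on whether $A$ is diagonally dominated and, when it is not, isolates a single index $i_0$ with $a_{i_0\cdots i_0\bar i_0\cdots\bar i_0}<r_{i_0}(A)$ and argues that all other rows are dominated; you argue pair by pair, using the identity $(\alpha-\hat r_i(A))\beta-r'_i(A)r_j(A)=[(\alpha-r_i^j(A))\beta-\gamma r_j(A)]+(r'_i(A)-\gamma)(\beta-r_j(A))$ together with $\gamma\le r'_i(A)$, and, when $\beta<r_j(A)$, the LLK inequality for the reversed pair $(j,i)$ and the bound $\gamma^*\le r_j(A)$. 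I have checked the identity and the whole chain: it is correct and complete whenever $\alpha=a_{i\cdots i\bar i\cdots\bar i}>0$, and in particular your treatment of the strict case is complete, since strict LLK forces positive diagonal entries. Where it applies, your argument is actually more careful than the paper's, whose non-dominated case divides by $a_{i_0\cdots i_0\bar i_0\cdots\bar i_0}$ and tacitly assumes a single non-dominated row.

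The genuine gap is the deferred case $\alpha=0$ (with $\beta<r_j(A)$), which is not a routine loose end: the non-strict implication can actually fail there, so no ``separate handling'' can close it as stated. If $\alpha=0$ and $\beta>0$ you can still conclude, because the LLK inequality at $(i,j)$ forces $r_i^j(A)=0$ and $\gamma=0$, hence $r'_i(A)=\hat r_i(A)=0$; but if $\alpha=\beta=0$ all relevant LLK inequalities may degenerate to $0\ge 0$ while $r'_i(A)r_j(A)>0$. Concretely, take $m=2$, $n=3$ and the Hermitian tensor with $a_{12\bar 2\bar 3}=a_{23\bar 1\bar 2}=1$ and all other entries zero: every diagonal entry and every entry of the form $a_{ij\bar j\bar j}$ vanishes, so each LLK inequality reads $0\ge 0$ and $A$ is an LLK tensor, yet $(a_{11\bar 1\bar 1}-\hat r_1(A))a_{22\bar 2\bar 2}=0<1=r'_1(A)r_2(A)$, so $A$ is not an LL tensor. (For Hermitian matrices, $m=1$, the symmetry $|a_{i\bar j}|=|a_{j\bar i}|$ rules this configuration out, which is why the degeneracy is easy to overlook; the paper's own proof also breaks at exactly this point.) So your argument proves the statement under the additional hypothesis that the diagonal entries are positive (in particular the strict version), but the zero-diagonal case is a true obstruction, not something you can defer.
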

\begin{proof}
Suppose that A is a diagonally dominated tensor. Then for $1\leq i\leq n, a_{i\cdots i\bar{i}\cdots\bar{i}}\geq 0.$ Furthermore, for $i\neq j,$  we have
\begin{align} a_{j\cdots j\bar{j}\cdots\bar{j}}\geq r_{j}(A) \label{4.10}\end{align} and
$a_{i\cdots i\bar{i}\cdots\bar{i}}\geq r_{i}(A)=r_{i}^{j}(A)+|a_{ij\cdots j\bar{j}\cdots \bar{j}}|,$
i.e., \begin{align}  a_{i\cdots i\bar{i}\cdots\bar{i}}-r_{i}^{j}(A)\geq |a_{ij\cdots j\bar{j}\cdots \bar{j}}|.\label{4.11}\end{align}
Multiplying (\ref{4.10}) and (\ref{4.11}), we directly get that $A$ is an LLK tensor.

Next, suppose that A is an LLK tensor. Then either A is diagonally dominated or A is not diagonally dominated.
If A is diagonally dominated, for all $1\leq i, j\leq n$, we have
$a_{j\cdots j\bar{j}\cdots\bar{j}}\geq r_{j}(A)$ and
$a_{i\cdots i\bar{i}\cdots\bar{i}}-\hat{r}_{i}(A)\geq r'_{i}(A).$
Multiplying them we get that $$(a_{i\cdots i\bar{i}\cdots\bar{i}}-\hat{r}_{i}(A))a_{j\cdots j\bar{j}\cdots\bar{j}}\geq r'_{i}(A)r_{j}(A).$$
So $A$ is an LL tensor.

Assume that A is not a diagonally dominated tensor but an LLK tensor. Then there is a $1\leq i_{0}\leq n$ such that
$0\leq a_{i_{0}\cdots i_{0}\bar{i}_0\cdots \bar{i}_0}<r_{i_{0}}(A).$
As $$(a_{i\cdots i\bar{i}\cdots\bar{i}}-r_{i}^{j}(A))a_{j\cdots j\bar{j}\cdots\bar{j}}\geq r_{j}(A)|a_{ij\cdots j\bar{j}\cdots \bar{j}}|$$ for all $i\neq j$ by definition, we have
$a_{i\cdots i\bar{i}\cdots\bar{i}}-r_{i}^{j}(A)\geq |a_{ij\cdots j\bar{j}\cdots \bar{j}}|$, for $i\neq i_0$. So $$a_{i\cdots i\bar{i}\cdots\bar{i}}\geq r_{i}^{j}(A)+|a_{ij\cdots j\bar{j}\cdots \bar{j}}|=r_{i}(A)=\hat{r}_{i}(A)+r'_{i}(A).$$ 
Thus, we get that $(a_{i\cdots i\bar{i}\cdots\bar{i}}-\hat{r}_{i}(A))a_{j\cdots jj}\geq r'_{i}(A)r_{j}(A)$ for all $i,j\neq i_{0}$ for $i, j\neq i_0$. So we only need to deal with the pairs $(i_0, j)$ and $(i, i_0)$ for $i, j\neq i_0$.
For $j\neq i_{0}$, we have
$$(a_{i_{0}\cdots i_{0}\bar{i}_0\cdots \bar{i}_0}-r_{i_{0}}^{j}(A))a_{j\cdots j\bar{j}\cdots \bar{j}}\geq r_{j}(A)|a_{i_{0}j\cdots j\bar{j}\cdots \bar{j}}|. $$
As $0\leq a_{i_{0}\cdots i_{0}\bar{i}_0\cdots \bar{i}_0}<r_{i_{0}}(A),$ we get $a_{i_{0}\cdots i_{0}\bar{i}_0\cdots \bar{i}_0}-r_{i_{0}}^{j}(A)<|a_{i_{0}j\cdots j\bar{j}\cdots \bar{j}}|, a_{i_{0}\cdots i_{0}\bar{i}_0\cdots \bar{i}_0}-\hat{r}_{i_{0}}(A)<r'_{i_{0}}(A).$ If $r_{j}(A)>0,r'_{i_{0}}(A)>0$ then
$$\dfrac{a_{i_{0}\cdots i_{0}\bar{i}_0\cdots \bar{i}_0}-\hat{r}_{i_{0}}(A)}{r'_{i_0}(A)}\dfrac{a_{j\cdots j\bar{j}\cdots \bar{j}}}{r_{j}(A)}\geq \dfrac{a_{i_{0}\cdots i_{0}\bar{i}_0\cdots \bar{i}_0}-r_{i_{0}}^{j}(A)}{|a_{i_{0}j\cdots j\bar{j}\cdots \bar{j}}|}\dfrac{a_{j\cdots j\bar{j}\cdots \bar{j}}}{r_{j}(A)}\geq 1,$$
which implies that (\ref{4.13}) holds for$(i_0, j)$. If $r_{j}(A)=0$ or $r'_{i_{0}}=0$, we have
$$(a_{i_{0}\cdots i_{0}\bar{i}_0\cdots \bar{i}_0}-\hat{r}_{i_{0}(A)})a_{j\cdots j\bar{j}\cdots \bar{j}}\geq 0=r'_{i_{0}}(A)r_{j}(A).$$
So (\ref{4.13}) holds for $i=i_{0}$ and j.
Assume that $i\neq i_{0}.$ By definition we have 
$$(a_{i\cdots i\bar{i}\cdots \bar{i}}-r_{i}^{i_0}(A))a_{i_0\cdots i_0\bar{i}_0\cdots\bar{i}_0}\geq r_{i_0}(A)|a_{ii_0\cdots i_0\bar{i}_0\cdots\bar{i}_0}|.$$
Then $a_{i\cdots i\bar{i}\cdots\bar{i}}\geq r_{i}(A)$ for $i\neq i_{0}$ which gives that $a_{i\cdots i\bar{i}\cdots\bar{i}}-\hat{r}_{i}(A)\geq r'_{i}(A)\geq0.$ If $r_{i_0}(A)>0,r'_{i}(A)>0$, then
$$\dfrac{a_{i\cdots i\bar{i}\cdots\bar{i}}-\hat{r}_{i}(A)}{r'_{i}(A)}\dfrac{a_{i_{0}\cdots i_{0}\bar{i}_0\cdots \bar{i}_0}}{r_{i_{0}}(A)}\geq \frac{a_{i\cdots i\bar{i}\cdots \bar{i}}-r_{i}^{i_0}(A)}{|a_{ii_0\cdots i_0\bar{i}_0\cdots\bar{i}_0}|}\frac{a_{i_0\cdots i_0\bar{i}_0\cdots \bar{i}_0}}{r_{i_0}(A)}\geq 1$$
which implies that (\ref{4.13}) holds for $(i, i_{0}).$ If $r_{i_0}(A)=0$ or $r'_i(A)=0$, then 
$$(a_{i\cdots i\bar{i}\cdots\bar{i}}-\hat{r_{i}}(A))a_{i_{0}\cdots i_{0}\bar{i}_0\cdots \bar{i}_0}\geq0=r'_{i}(A)r_{i_{0}}(A).$$
So(\ref{4.13}) still holds for $(i, i_{0})$. 
Thus, (\ref{4.13}) always holds for $i\neq j$ and $A$ is an LL tensor.
The proofs for strict tensors are similar. 
\end{proof}	
The following examples demonstrate the differences of the three types of Hermitian tensors.
\begin{exam}
Let $$A=\begin{pmatrix} 3 & \sqrt{-1} &0\\-\sqrt{-1}&3&1\\
0&1&3\end{pmatrix}, B=\begin{pmatrix} 3 & \sqrt{-1} &0\\-\sqrt{-1}&\frac{3}{2}&1\\
0&1&3\end{pmatrix},
C=\begin{pmatrix} \frac{1}{2} & \sqrt{-1} &0\\-\sqrt{-1}&5&1\\
0&1&3\end{pmatrix}.$$ Then $A, B, C$ have the same $r_1=1, r_2=2, r_3=1$; $r^2_1=0, r^3_1=1, r_2^1=1, r_2^3=1,r_3^1=1, r_3^2=0$ and $r_1'=1, r_2'=2, r_3'=1, \hat{r}_1=\hat{r}_2=\hat{r}_3=0$. From the definitions, we directly get that $A$ is strictly diagonally dominated. $B$ is strictly LLK but not diagonally dominated. $C$ is strictly LL but not LLK. Off course, they are all Hermitian PD.
\end{exam}
Let $\mathcal A=(a_{i_1\cdots i_m\bar{j}_1\cdots \bar{j}_m}), 1\leq i_1,\cdots, i_m,j_1,\cdots,j_m\leq n$ be a $2m$-th order $n$-dimensional Hermitian tensor. For $1\leq s\leq n$, denote the $s$-dimensional Hermitian sub-tensor $$B=(a_{i_1\cdots i_m\bar{j}_1\cdots \bar{j}_m}), 1\leq i_1,\cdots, i_m,j_1,\cdots,j_m\leq s$$ and $n-s$-dimensional Hermitian sub-tensor $$C=(a_{i_1\cdots i_m\bar{j}_1\cdots \bar{j}_m}), s+1\leq i_1,\cdots, i_m,j_1,\cdots,j_m\leq n.$$ Let $K=max\{|a_{i_1\cdots i_m\bar{j}_1\cdots \bar{j}_m}|: a_{i_1\cdots i_m\bar{j}_1\cdots \bar{j}_m}\in A\setminus (B\cup C)\}$. Namely, $K$ is the maximum modulus of entries of $A$ which has at least an index less or equal to $s$ and at least an index greater or equal to $s+1$. 
We have the following approximation criterion of Hermitian PSD Hermitian tensors which would be useful in next section.
\begin{prop}\label{p4.16}
If $B$ and $C$ are Hermitian PD with $\hat{H}$-eigenvalue $\lambda_{S_B}\geq K_1>0, \lambda_{S_C}\geq K_2>0$. Then there is a constant $C>0$ depending on $\dfrac{K}{K_1}$ such that if $\dfrac{K_2}{K}\geq C$, then $A$ is Hermitian PD.
\end{prop}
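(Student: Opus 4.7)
The plan is to split any nonzero $\mathbf{x}\in\mathbb{C}^n$ as $(\mathbf{y},\mathbf{z})$ with $\mathbf{y}=(x_1,\dots,x_s)$ and $\mathbf{z}=(x_{s+1},\dots,x_n)$, and decompose
$$f(A)(\mathbf{x})=f(B)(\mathbf{y})+f(C)(\mathbf{z})+R(\mathbf{x}),$$
where $R$ collects precisely those monomials whose index tuple $(i_1,\dots,i_m,j_1,\dots,j_m)$ has at least one index in each of the blocks $\{1,\dots,s\}$ and $\{s+1,\dots,n\}$. Setting $Y=\sum_{i\le s}|x_i|^{2m}$ and $Z=\sum_{i>s}|x_i|^{2m}$, Proposition \ref{3.1} applied to $S_B$ and $S_C$, combined with the $(2m)$-homogeneity of $f$, yields $f(B)(\mathbf{y})\ge K_1 Y$ and $f(C)(\mathbf{z})\ge K_2 Z$. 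So the task reduces to absorbing $|R|$ into $K_1 Y+K_2 Z$.

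For each mixed tuple let $p$ denote the number of its indices lying in $\{1,\dots,s\}$ (so $1\le p\le 2m-1$) and put $q=2m-p$. Using $|x_i|\le Y^{1/(2m)}$ for $i\le s$, $|x_i|\le Z^{1/(2m)}$ for $i>s$, and the assumption that every mixed entry has modulus $\le K$, each such monomial is bounded in absolute value by $K\cdot Y^{p/(2m)} Z^{q/(2m)}$. Grouping the tuples by $p$ and counting gives the estimate
$$|R|\ \le\ K\sum_{p=1}^{2m-1}N_p\,Y^{p/(2m)}Z^{q/(2m)},\qquad N_p=\binom{2m}{p}s^p(n-s)^{q}.$$

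The crucial step is the parametrized Young inequality
$$Y^{p/(2m)}Z^{q/(2m)}\ \le\ \tfrac{p}{2m}\,t^{2m/p}\,Y+\tfrac{q}{2m}\,t^{-2m/q}\,Z\qquad(t>0),$$
which trades a small $Y$-coefficient for a large $Z$-coefficient. I would choose $t=t(K/K_1,m,n)>0$ small enough that $K\sum_{p=1}^{2m-1}N_p\frac{p}{2m}\,t^{2m/p}\le\tfrac{K_1}{2}$; this is possible since every exponent $2m/p$ is positive. With this $t$ fixed, set
$$C\bigl(\tfrac{K}{K_1}\bigr):=\sum_{p=1}^{2m-1}N_p\,\tfrac{q}{2m}\,t^{-2m/q}+1.$$
If $K_2/K\ge C$ then $K\sum_p N_p\tfrac{q}{2m}t^{-2m/q}\le K_2-K$, and combining the bounds yields
$$f(A)(\mathbf{x})\ \ge\ K_1 Y+K_2 Z-|R|\ \ge\ \tfrac{K_1}{2}\,Y+K\,Z\ >\ 0$$
whenever $\mathbf{x}\ne 0$, proving that $A$ is Hermitian positive definite.

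The main obstacle is the uniform parameter choice: a single $t$ has to simultaneously control all $2m-2$ cross exponents, and it is precisely this constraint that forces the threshold $C$ to blow up as $K/K_1$ grows (a small ratio $K_1/K$ demands small $t$, which inflates each $Z$-coefficient $t^{-2m/q}$). Everything beyond this balancing is combinatorial bookkeeping involving the explicit constants $N_p$.
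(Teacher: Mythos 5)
Your proposal is correct and follows essentially the same strategy as the paper: split $f(A)(\mathbf{x})$ into the $B$-block, the $C$-block, and the mixed remainder, bound the two blocks from below via Proposition~\ref{3.1}, and absorb the mixed part by a parametrized AM--GM/Young inequality in which a small parameter shrinks the $Y$-coefficient at the cost of inflating the $Z$-coefficient, which is then paid for by taking $K_2/K$ large. The paper applies the weighted AM--GM monomial by monomial (choosing weights $a_i<K_1/(NK)$ for $i\le s$ and balancing with $a_j$ for $j>s$), while you first bound each mixed monomial by $K\,Y^{p/(2m)}Z^{q/(2m)}$ and then apply Young's inequality grouped by $p$; this is a cosmetic reorganization of the same parametric trade-off, and both yield a threshold $C$ depending only on $K/K_1$ together with $m,n,s$.
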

\begin{proof}
By the proof of Proposition 3.3, for $\mathbf{x}=(x_1,\cdots, x_n)^T$, we get that \begin{align*}
\sum_{i_1,\cdots, i_m, j_1, \cdots, j_m=1}^s a_{i_1\cdots i_m\bar{j}_1\cdots \bar{j}_m} x_{i_1} x_{i_2}\cdots x_{j_m}\overline{x_{j_1}}\cdots\overline{x_{j_m}}\geq K_1(|x_1|^{2m}+\cdots+|x_s|^{2m})\\
\sum_{i_1, i_2,\cdots, i_m, j_1, \cdots, j_m=s+1}^n a_{i_1\cdots i_m\bar{j}_1\cdots \bar{j}_m} x_{i_1} x_{i_2}\cdots x_{j_m}\overline{x_{j_1}}\cdots\overline{x_{j_m}}\geq K_2(|x_{s+1}|^{2m}+\cdots+|x_n|^{2m})
\end{align*}
The arithmetic-geometric mean inequality gives $$|x_{i_1}\cdots x_{i_m}\overline{x_{j_1}}\cdots\overline{x_{j_m}}|\leq \dfrac{1}{2m}(a_{i_1}|x_{i_1}|^{2m}+\cdots+a_{i_m}|x_{i_m}|^{2m}+a_{j_1}|x_{j_1}|^{2m}+a_{j_m}|x_{j_m}|^{2m}),$$ where $a_i>0$ satisfies that $a_{i_1}\cdots a_{j_m}=1$. Let $N=(2m)^{n}-(2m)^s-(2m)^{n-s}$. If there is at least one index among $\{i_1,\cdots, i_m, j_1, \cdots, j_m\}$ less than $s+1$ and at least an index among it greater than $s$, we choose $a_i<\dfrac{K_1}{NK}$ if $i\leq s$ and $a_j=c$ for $j\geq s+1$. By $a_{i_1}\cdots a_{j_m}=1$, for $j\geq s+1$, $a_j\leq \max\{1,(\frac{NK}{K_1})^{2m-1}\}$. For this $\{i_1,\cdots, i_m, j_1, \cdots, j_m\}$, we have $$|x_{i_1}\cdots x_{i_m}\overline{x_{j_1}}\cdots\overline{x_{j_m}}|< \dfrac{K_1}{NK}(|x_{1}|^{2m}+\cdots+|x_{s}|^{2m})+\max\{1,(\frac{NK}{K_1})^{2m-1}\}(|x_{s+1}|^{2m}+\cdots+|x_{n}^{2m}|).$$
Then 
\begin{align*}
f(A)(\mathbf{x})&=\sum_{i_1,\cdots, i_m, j_1, \cdots, j_m=1}^n a_{i_1\cdots i_m\bar{j}_1\cdots \bar{j}_m} x_{i_1} x_{i_2}\cdots x_{j_m}\overline{x_{j_1}}\cdots\overline{x_{j_m}}\\
&> K_1(|x_1|^{2m}+\cdots+|x_s|^{2m})+K_2(|x_{s+1}|^{2m}+\cdots+|x_n|^{2m})-NK(\dfrac{K_1}{NK}(|x_{1}|^{2m}+\cdots+|x_{s}|^{2m})\\
& \ \ \ \ \ +\max\{1,(\frac{NK}{K_1})^{2m-1}\}(|x_{s+1}|^{2m}+\cdots+|x_{n}^{2m}|))\\
&\geq (K_2-\max\{NK,\dfrac{(NK)^{2m}}{(K_1)^{2m-1}}\})(|x_{s+1}|^{2m}+\cdots+|x_{n}^{2m}|)
\end{align*}
So if $C=\max\{N,N^{2m}(\dfrac{K}{K_1})^{2m-1}\}$ and $\dfrac{K_2}{K}\geq C$, then $f(A)(\mathbf{x})> 0$ for nonzero $\mathbf{x}$ and $A$ is Hermitian PD.
 \end{proof}

\section{Relation to holomorphic sectional curvature}
In this section, we relate the concept of Hermitian tensors to the holomorphic sectional curvature on a Hermitian manifold. Holomorphic sectional curvature is one of the most important invariants on a Hermitian manifold. Many deep results in algebraic geometry and complex analysis such as the ampleness of the canonical bundle, Schwarz lemma, Hartogs extension theorem have close relations with the holomorphic sectional curvature.

Let $(M,J, g)$ be an $n$-dimensional Hermitian manifold. Namely, $M$ is a smooth manifold, $J$ is an integrable almost complex structure on $M$ and $g$ is a Riemannian metric on $M$ satisfying $g(JX,JY)=g(X,Y)$ for any tangent vectors $X,Y$. Assume that $(U, \varphi:U\rightarrow \mathbb C^n)$ is a local holomorphic coordinate chart with $(z_1, \cdots, z_n)$ being the coordinates. Let $\omega$ be the fundamental form of $g$ which is defined by $\omega(X,Y)=g(JX,Y)$. $g$ is called a K\"ahler metric if $d\omega=0$. Denote $T^{1,0}(M)$ to be the $\sqrt{-1}$ eigenbundle of $J$ on $TM\otimes \mathbb C$ with $\{\frac{\partial}{\partial z_1}, \cdots, \frac{\partial}{\partial z_n}\}$ being a local basis of $T^{1,0}(U)$. Let $g_{i\bar{j}}=g(\frac{\partial}{\partial z_i}, \frac{\partial}{\partial \bar{z}_j})$. Then $(g_{i\bar{j}})$ is a positive definite Hermitian matrix. On $U$, $\omega=\sqrt{-1}\sum_{i,j=1}^ng_{i\bar{j}}dz_i\wedge d\bar{z}_j$. 

A Hermitian connection on a Hermitian manifold is an affine connection $\nabla$ on $M$ satisfying $\nabla J=\nabla g=0$. There are plenty of Hermitian connections on a Hermitian manifold. If the $(1,1)$ part of the torsion of $\nabla$ vanishes, then $\nabla$ is unique and is called the Chern connection  $\nabla^C$ (\cite{Z}). It is known that $g$ is a K\"ahler metric if and only if $\nabla^C$ coincides with the Levi-Civita connection, which is the unique torsion free connection preserving $g$. For any Hermitian connection $\nabla$, the curvature tensor is a $(4,0)$ tensor $R$ on $M$ which is defined by 
$$R(X,Y,Z,W)=g(\nabla_X\nabla_YZ-\nabla_Y\nabla_XZ-\nabla_{[X,Y]}Z, W),$$ where $X,Y,Z,W\in TM\otimes \mathbb C$. From the definition of $R$ and $\nabla g=0$, the following hold
\begin{align}
 R(X,Y,Z,W)&=-R(Y,X,Z,W) \notag \\
R(X,Y,Z,W)&=-R(X,Y,W,Z)\notag \\
\overline{R(X,Y,Z,W)}&=R(\bar{X},\bar{Y},\bar{Z},\bar{W}).\label{5.1}
\end{align} 
The holomorphic sectional curvature $H$ at a point $p\in M$ is a function on the $T_p^{1,0}(M)\setminus \{0\}$ which is defined by $$H(v)=\dfrac{R(v,\bar{v},v,\bar{v})}{|v|_g^4},$$ where $v\in T_p^{1,0}(M)\setminus \{0\}$. We call that $\nabla$ has positive (semipositive, negative, seminegative) holomorphic sectional curvature if $H(v)>0 (\geq 0, <0, \leq 0)$ for any $v\in T_p^{1,0}(M)\setminus \{0\}$ and any $p\in M$.

Assume that $\{e_1,\cdots, e_n\}$ is a local frame of $T^{1,0}(M)$ around $p$. We can define a $4$-th order $n$-dimensional complex tensor from the coefficients of the curvature tensor $R$ as follows, using the convention in section 2 .
\begin{defn}
For the curvature tensor $R$ of a Hermitian connections $\nabla$, define $A_R=(a_{ij\bar{k}\bar{l}}), 1\leq i,j,k,l\leq n$ with $a_{ij\bar{k}\bar{l}}=R(e_i,\bar{e}_k,e_j,\bar{e}_l)$. 
\end{defn}

By (\ref{5.1}), $$\overline{R(e_i,\bar{e}_k,e_j,\bar{e}_l)}=R(\bar{e}_i,e_k,\bar{e}_j,e_l)=R(e_k,\bar{e}_i,e_l,\bar{e}_j).$$ So $\overline{a_{ij\bar{k}\bar{l}}}=a_{kl\bar{i}\bar{j}}$. Therefore, $A_R$ is a Hermitian tensor. If $g$ is a K\"ahler metric with $\nabla$ being the Levi-Civita connection, or if $\nabla$ satisfies K\"ahler-like condition (\cite{YZ}), then the curvature tensor satisfies $R(e_i,\bar{e}_k,e_j,\bar{e}_l)=R(e_j,\bar{e}_k,e_i,\bar{e}_l)=R(e_i,\bar{e}_l,e_j,\bar{e}_k)$. It gives $$a_{ij\bar{k}\bar{l}}=a_{ji\bar{k}\bar{l}}=a_{ij\bar{l}\bar{k}}=a_{ji\bar{l}\bar{k}}.$$ So $A_R$ is a CPS tensor in this case.

Let $v=\sum_{i=1}^n v^ie_i \in T_p^{1,0}(M)\setminus \{0\}$ and $g_{i\bar{j}}=g(e_i,\bar{e}_j)$, then 
$$H(v)=\dfrac{R(v,\bar{v},v,\bar{v})}{|v|_g^4}=\dfrac{\sum_{i,j,k,l=1}^nR(e_i,\bar{e}_k,e_j,\bar{e}_l)v^i\bar{v}^kv^j\bar{v}^l}{(\sum_{i,j=1}^n g_{i\bar{j}}v^i\bar{v}^j)^2}=\dfrac{\sum_{i,j,k,l=1}^na_{ij\bar{k}\bar{l}}v^iv^j\bar{v}^k\bar{v}^l}{(\sum_{i,j=1}^n g_{i\bar{j}}v^i\bar{v}^j)^2}.$$
So the holomorphic sectional curvature $H$ is positive (semipositive) at $p$ if and only if $A_R$ is Hermitian positive definite (semidefinite) at $p$.
As a result of Theorem 1.2, we have 
\begin{cor}
On a Hermitian manifold $(M,J,g)$, the holomorphic sectional curvature of a Hermitian connection $\nabla$ at $p\in M$  is positive (semipositive) if and only if the $\hat{H}$-eigenvalues of $S_{A_R}$ are positive (nonnegative) at $p$.
\end{cor}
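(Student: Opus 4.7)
The plan is to chain together the curvature formula derived immediately before the statement with Theorem 1.2, using the conjugate partial symmetrization as the bridge between general Hermitian tensors and CPS tensors.

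First, I would record that the denominator $\sum_{i,j=1}^n g_{i\bar{j}}v^i\bar{v}^j$ appearing in the displayed expression for $H(v)$ equals $|v|_g^2 > 0$ for every nonzero $v \in T_p^{1,0}(M)$, because $(g_{i\bar{j}})$ is a positive definite Hermitian matrix. Hence the sign of $H(v)$ coincides with the sign of the numerator
\[
\sum_{i,j,k,l=1}^n a_{ij\bar{k}\bar{l}} v^i v^j \bar{v}^k \bar{v}^l \;=\; f(A_R)(v).
\]
Therefore $H$ is positive (semipositive) at $p$ if and only if $f(A_R)(v) > 0$ ($\geq 0$) for every nonzero $v \in T_p^{1,0}(M)$, which is precisely the condition that $A_R$ is Hermitian positive definite (semidefinite) at $p$ in the sense of Definition \ref{complex}. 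Here I am using the fact, observed immediately before the definition of $A_R$, that $A_R$ is indeed a Hermitian tensor, which comes from the conjugation symmetry $\overline{R(X,Y,Z,W)} = R(\bar{X},\bar{Y},\bar{Z},\bar{W})$.

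Next, I would invoke the identity $f(A_R) = f(S_{A_R})$ established in Section 3, which gives that $A_R$ is Hermitian positive definite (semidefinite) if and only if its conjugate partial symmetrization $S_{A_R}$ is, and note that $S_{A_R}$ is a CPS tensor by construction. At this point Theorem \ref{thm1.2} applies to $S_{A_R}$: its Hermitian positive (semi)definiteness is equivalent to the positivity (nonnegativity) of all its $\hat{H}$-eigenvalues. Combining these equivalences yields the corollary. There is no real technical obstacle; the only point worth being explicit about is why one must symmetrize before invoking Theorem \ref{thm1.2}, since $A_R$ itself need not satisfy the CPS symmetry (\ref{symm}) unless the connection satisfies the Kähler-like condition mentioned earlier. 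The proof is essentially a bookkeeping assembly of the ingredients already in place.
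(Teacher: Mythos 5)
Your proposal is correct and follows essentially the paper's own route: reduce positivity of $H$ at $p$ to Hermitian positive (semi)definiteness of $A_R$ via the displayed curvature formula (the denominator $|v|_g^4>0$ playing no role in the sign), then pass to the CPS symmetrization $S_{A_R}$ using $f(A_R)=f(S_{A_R})$ and apply Theorem \ref{thm1.2}. The only cosmetic remark is that one could instead cite Proposition \ref{3.1} directly (which is stated for general Hermitian tensors and already outputs the $\hat H$-eigenvalues of $S_{A}$), saving the explicit symmetrization step; the paper's own citation of Theorem \ref{thm1.2} presupposes exactly the bridge you spell out, so your version is the more complete write-up of the same argument.
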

We now apply the methods in section 4 to study Hermitian manifolds with positive holomorphic sectional curvature. In \cite{AHZ}, Alvarez-Heier-Zheng prove that the projectivization of any holomorphic vector bundle over a compact K\"ahler manifold with positive holomorphic sectional curvature admits a K\"ahler metric with positive holomorphic sectional curvature. Specifically, let $(M,g)$ be a compact K\"ahler manifold and $\pi: (E,h)\to (M,g)$ be a holomorphic Hermitian vector bundle. They define a K\"ahler metric on the projectivization bundle $P(E)$ as: $$\omega_G=\lambda \pi^*(\omega_g)+\sqrt{-1}\partial\bar{\partial}\log h(v,\bar{v}).$$ 
They compute the curvature tensor of $\omega_G$ and show that $\omega_G$ has positive holomorphic sectional curvature if $\lambda$ is sufficiently large. A key step during the proof is to show when $\lambda$ is sufficiently large, the following tensor $G$ is Hermitian negative definite:
\begin{align*} G_{i\bar{j},k\bar{l}}&=\lambda g_{i\bar{j},k\bar{l}}+h_{v\bar{v},i\bar{j}k\bar{l}}-h_{v\bar{v},i\bar{j}}h_{v\bar{v},k\bar{l}}-h_{v\bar{v},i\bar{l}}h_{v\bar{v},k\bar{j}},\\
G_{i\bar{j},k\bar{\beta}}&=h_{v\bar{\beta},i\bar{j}k},\ \  G_{i\bar{j},\alpha\bar{\beta}}=h_{\alpha\bar{\beta},i\bar{j}}-h_{\alpha\bar{\beta}}h_{v\bar{v},i\bar{j}},\\
 G_{\alpha\bar{j},\gamma\bar{l}}&=0,\ \  G_{\alpha\beta,\gamma\bar{j}}=0,\ \ \
G_{\alpha\bar{\beta},\gamma\bar{\delta}}=-h_{\alpha\bar{\beta}}h_{\gamma\bar{\delta}}-h_{\alpha\bar{\delta}}h_{\gamma\bar{\beta}}
\end{align*}
where $1\leq i,j,k,l\leq n, n+1\leq \alpha, \beta, \gamma,\delta\leq n+r$, $(h_{\alpha\bar{\beta}})$ is positive definite and  $(-g_{i\bar{j},k\bar{l}})$ is a Hermitian PD tensor. Denoting $I_s=(I_{i\bar{j}k\bar{l}})$ to be the $s$-dimensional diagonal identity tensor defined by $I_{i\bar{j}k\bar{l}}=1$, if $i=j=k=l$ and $I_{i\bar{j}k\bar{l}}=0 $, otherwise. Since $(-g_{i\bar{j},k\bar{l}})$ and $(h_{\alpha\bar{\beta}}h_{\gamma\bar{\delta}}+h_{\alpha\bar{\delta}}h_{\gamma\bar{\beta}})$ are both Hermitian PD, there are $a, b>0$ such that $(-g_{i\bar{j},k\bar{l}})\geq a I_n$ and $(h_{\alpha\bar{\beta}}h_{\gamma\bar{\delta}}+h_{\alpha\bar{\delta}}h_{\gamma\bar{\beta}})\geq bI_r$. Here $\geq$ means that the difference of the two tensors are Hermitian PSD. Replacing $(-g_{i\bar{j},k\bar{l}})$ by $a I_n$ and replacing $(h_{\alpha\bar{\beta}}h_{\gamma\bar{\delta}}+h_{\alpha\bar{\delta}}h_{\gamma\bar{\beta}})$ by $bI_{r}$, we get a tensor $G'$ with $G\leq G'$. It follows easily that when $\lambda$ is large enough, $-S_{G'}$ is diagonally dominated hence $-G'$ is Hermitian PD by Corollary 4.9. So $G$ is Hermitian negative definite, which gives a new proof of the algebraic step in \cite{AHZ}.

 In \cite{CH}, Chaturvedi-Heier prove that for a compact holomorphic fibration over a Hermitian manifold with positive holomorphic sectional curvature, if the fibers have a smooth family of Hermitian metrics with positive holomorphic sectional curvature,  
then the total space admits Hermitian metrics of positive holomorphic sectional curvature. It can be compared to the the result of Cheung \cite{C} in the negative curvature case. A key step during their proof is the following lemma (see also \cite{C}).
\begin{lem}[Cheung, Chaturvedi-Heier] \label{l5.3} Let $R_{i\bar{j}k\bar{l}}, 1\leq i,j,k,l\leq n$ be the components of curvature tensor. Suppose the following is true at $p\in M$, for some positive constants $K,K_1,K_2$ and $s<n$:
$$\sum_{i,j,k,l=1}^s R_{i\bar{j}k\bar{l}}\xi^i\bar{\xi}^j\xi^k\bar{\xi}^l\geq K_1\sum_{i,j=1}^s |\xi^i|^2|\xi^j|^2,$$
$$\sum_{\alpha,\beta,\gamma,\delta=s+1}^n R_{\alpha\bar{\beta}\gamma\bar{\delta}}\xi^\alpha\bar{\xi}^\beta\xi^\gamma\bar{\xi}^\delta\geq K_2\sum_{\alpha,\beta=s+1}^n |\xi^\alpha|^2|\xi^\beta|^2,$$
and $|R_{i\bar{j}k\bar{l}}|<K$ whenever $\min(i,j,k,l)\leq s$ and $\max(i,j,k,l)>s$. Then there exists a constant $C$ depending on $\dfrac{K_1}{K}$ such that if $\dfrac{K_2}{K}\geq C$, then $R$ is Hermitian PSD.
\end{lem}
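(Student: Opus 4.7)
The plan is to recognize this statement as essentially a restatement of Proposition \ref{p4.16} applied to the curvature tensor, so the work consists of translating the hypotheses into the language of $\hat H$-eigenvalues and then quoting the earlier proposition. Throughout, I view $R=(R_{i\bar j k\bar l})$ as a $4$-th order $n$-dimensional Hermitian tensor (in the indexing convention of Section 2, corresponding to $A_R$), with sub-tensors $B=(R_{i\bar j k\bar l})_{1\le i,j,k,l\le s}$ and $C=(R_{\alpha\bar\beta\gamma\bar\delta})_{s+1\le \alpha,\beta,\gamma,\delta\le n}$.

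First I would verify that $B$ and $C$ satisfy the hypotheses of Proposition \ref{p4.16}. From the first inequality in the statement and the elementary bound
\begin{align*}
\sum_{i,j=1}^s |\xi^i|^2|\xi^j|^2 \;=\;\Bigl(\sum_{i=1}^s|\xi^i|^2\Bigr)^{\!2}\;\ge\;\sum_{i=1}^s|\xi^i|^{4},
\end{align*}
we get $f(B)(\xi)\ge K_1\sum_{i=1}^s|\xi^i|^{4}$ for every $\xi\in\mathbb C^s$. Since $f(B)=f(S_B)$, applying the Lagrange-multiplier argument of Proposition \ref{3.1} on the unit sphere $\{\sum|\xi^i|^4=1\}$ shows that every $\hat H$-eigenvalue of $S_B$ is at least $K_1$; in particular $B$ is Hermitian PD with $\lambda_{S_B}\ge K_1$. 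An identical argument gives $\lambda_{S_C}\ge K_2$.

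Next I would control the mixed entries. By hypothesis, every entry $R_{i\bar j k\bar l}$ with at least one index in $\{1,\ldots,s\}$ and at least one in $\{s+1,\ldots,n\}$ satisfies $|R_{i\bar j k\bar l}|<K$; this is precisely the quantity denoted $K$ in the setup of Proposition \ref{p4.16} for the tensor $A=R$, the sub-tensor $B$, and the sub-tensor $C$. At this point Proposition \ref{p4.16} applies verbatim: there is a constant $C>0$ depending only on $K/K_1$ (equivalently, on $K_1/K$) such that if $K_2/K\ge C$ then $R$ is Hermitian PD, and in particular Hermitian PSD, which is the desired conclusion.

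The only real content is the estimate inside Proposition \ref{p4.16} (the arithmetic-geometric mean argument that absorbs the cross terms into the diagonal blocks), and that has already been done. The main potential obstacle at this level is bookkeeping: making sure the normalization of $\hat H$-eigenvalues via the quartic norm $\sum|\xi^i|^4$ is consistent with the quadratic norm $\sum|\xi^i|^2$ appearing in the hypotheses, which the elementary inequality above takes care of, and noting that the two different conventions for the dependence of the threshold $C$ ($K/K_1$ vs.\ $K_1/K$) are logically equivalent.
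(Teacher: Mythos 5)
Your proof is correct and follows the same route as the paper: the paper's own argument is the short remark after the lemma that the quartic norm $\sum|\xi^i|^4$ and the quadratic norm squared $\bigl(\sum|\xi^i|^2\bigr)^2$ are comparable, so Lemma \ref{l5.3} is a special case of Proposition \ref{p4.16}. You spell out the translation into $\hat H$-eigenvalue bounds in more detail, but the content and the key reduction are identical.
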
 
Since $(|\xi^1|^4+\cdots+|\xi^s|^4)\leq \sum_{i,j=1}^s |\xi^i|^2|\xi^j|^2\leq s(|\xi^1|^4+\cdots+|\xi^s|^4)$ and $(|\xi^{s+1}|^4+\cdots+|\xi^n|^4)\leq \sum_{\alpha,\beta=s+1}^n |\xi^\alpha|^2|\xi^\beta|^2\leq (n-s)(|\xi^{s+1}|^4+\cdots+|\xi^n|^4)$. It is clear that Lemma \ref{l5.3} is a special case of Proposition \ref{p4.16}.\\

\end{document}